\definecolor{black}{rgb}{0,0,0}
\definecolor{red}{rgb}{1,0,0}
\newcommand\red[1]{\textcolor{red}{#1}}
\definecolor{blue}{rgb}{0,0,1}
\newcommand\blue[1]{\textcolor{blue}{#1}}
\newtheorem{theorem}{Theorem}[section]
\newtheorem{lemma}[theorem]{Lemma}
\newtheorem{corollary}[theorem]{Corollary}
\newtheorem{remark}[theorem]{Remark}
\newcommand{\dy}{\mathrm{d}y}
\newcommand{\prnt}[1]{\left( #1 \right)}
\newcommand{\eps}{\varepsilon}
\newcommand{\norm}[1]{\left\|#1\right\|}
\newcommand{\normL}[2]{\norm{#1}_{L^2\prnt{#2}}}
\newcommand{\normI}[2]{\norm{#1}_{L^{\infty}\prnt{#2}}}
\newcommand{\ep}{\eps}
\def\Dbe{{\partial^\alpha_t}}
\newcommand{\normq}[2]{\norm{#1}_{L^{q}\prnt{#2}}}
\title{Homogenization of time-fractional diffusion equations with periodic coefficients}
\author{
Jiuhua Hu\thanks{Department of Mathematics, Texas A\&M University,
College Station, TX 77843-3368, USA. E-mail: jhu@math.tamu.edu} \, and Guanglian Li\thanks{Department of Mathematics, Imperial College London, Kensington, London SW7 2AZ, United Kingdom. E-mail: guanglian.li@imperial.ac.uk, lotusli0707@gmail.com.} 
}
\begin{document}
\maketitle
\begin{abstract}
We consider the initial boundary value problem for the time-fractional diffusion equation with a homogeneous Dirichlet boundary condition and an inhomogeneous initial data $a(x)\in L^{2}(D)$ in a bounded domain $D\subset \mathbb{R}^d$ with a sufficiently smooth boundary. We analyze the homogenized solution under the assumption that the diffusion coefficient $\kappa^{\epsilon}(x)$ is smooth and
periodic with the period $\epsilon>0$ being sufficiently small. We derive that its first order approximation has a convergence rate of $\mathcal{O}(\epsilon^{1/2})$ when the dimension $d\leq 2$ and $\mathcal{O}(\epsilon^{1/6})$ when $d=3$. Several numerical tests are presented to show the performance of the first order approximation.

\noindent{\bf Keywords:}
time-fractional diffusion; homogenization; 2-scale asymptotic expansion; first order approximation; error estimate
\end{abstract}

\section{Introduction}
Let $D$ be a bounded domain in $\mathbb R^d\,(d=1,2,3)$ with
a sufficiently smooth boundary $\partial D$. We consider a partial differential equation with the fractional derivative in time $t$, satisfying:
\begin{equation}
\label{eq:fine-model}
\left\{
\begin{aligned}
\Dbe u^\epsilon(x,t) & =\nabla\cdot \left(\kappa^{\eps}({x})\nabla u^{\epsilon}(x,t) \right) && \text{in } D,
\quad t\in \left( 0,T\right] \\
u^\epsilon& =0 && \text{on }\partial D, \;t\in \left( 0,T\right] \\
u^\epsilon(0) & =a(x)   && \text{in } D.
\end{aligned}
\right .
\end{equation}
Here, $0< \alpha <1$ is a given fixed parameter and the matrix $\kappa^{\eps}({x}):=\kappa(x/\epsilon)$ is periodic with period $\eps$. Let $(\kappa_{ij}(x))_{i,j=1}^{d}$ be symmetric with $\kappa_{ij} (x)\in C^{\infty}(D)$, $i,j=1, \ldots,d$. We assume for some constant $\mu>0$, there holds
\[
\kappa_{ij} (x)\xi_i\xi_j\geq \mu |\xi|^2 \text{ for all } \xi\in \mathbb R^d \text{ and } x\in D.
\]
The initial data $a(x)\in L^2(D)$ is a given macro-scale function and $T>0$ is a fixed value.

In the model problem \eqref{eq:fine-model}, $\Dbe w$ refers to the
left-sided Caputo fractional derivative of order  $\alpha$ 
of the function $w(t)$, defined by (see, e.g. \cite[p. 91, (2.4.1)]{KilbasSrivastavaTrujillo:2006}
or \cite[p. 78]{Podlubnybook})
\begin{equation*}
  \Dbe w (t) = \frac{1}{\Gamma(1-\alpha)}\int_0^t \frac{1}{(t-s)^\alpha}w'(s)\, \mathrm{ds}.
\end{equation*}

Fractional diffusion equations were introduced in physics with the aim of describing diffusions in media with fractal geometry \cite{nigmatullin1986realization}. They have been applied to many fields, e.g., in engineering, physics, biology and finance. Their practical applications include electron transport in Xerox photocopier, visco-elastic materials, and protein transport in cell membranes \cite{scher1975anomalous,giona1992fractional,kou2008stochastic}. In this paper, we are concerned with the fractional diffusion problem \eqref{eq:fine-model} in a heterogeneous periodic media $\kappa^{\epsilon}(x)$, which is utilized in many important applications, for instance, porous media and composite material modeling. Most recently, periodic structures are utilized in metamaterials \cite{Vanel:2017:119/64002} to design novel materials. The main challenge in the classical numerical treatment of these applications is that it becomes prohibitively expensive and even intractable at the microscale as $\epsilon\to 0$. 

The goal of this paper is to construct an efficient numerical solver for \eqref{eq:fine-model} based on homogenization theory \cite{ref:Jikov.1991}. The main idea of homogenization is to obtain the effective or homogenized problem by solving $d$ cell problems, and then the corresponding homogenized solution $u_0$ serves as a good approximation to original unknown $u^{\epsilon}$ as the period $\epsilon\to 0$. To the best of our knowledge, there has been no such result for time-fractional diffusion problems so far. However, there are quite a few results on the parabolic equations, i.e., $\alpha=1$ in \eqref{eq:fine-model}, for the same setting, cf. \cite{pastukhova2010estimates,Zhikov2006}. Nevertheless, it is nontrivial to generalize the results for the parabolic equations to the time-fractional equations with the same technique, mainly due to the lack of the product rule for the fractional derivative. 

We prove in Theorem \ref{lem:R2} the error between the exact solution $u^{\epsilon}$ and its first order approximation $u^{\epsilon}_1$ is of $\mathcal{O}(\epsilon^{\min\{1/2,2/d-1/2\}})$ in the Bochner space $L^p([s,T]; H^{1}(D))$ for any $s\in (0,T]$ and $p\in (1,\frac{1}{\alpha})$ as $\epsilon\to 0$. Thus, the first order approximation $u^{\epsilon}_1$ achieves optimal convergence rate of $\mathcal{O}(\epsilon^{1/2})$ when $d\leq 2$ and $\mathcal{O}(\epsilon^{1/6})$ when $d=3$. For the latter case, it is unclear whether or not this rate is optimal. The proof of the result relies on the regularity of time-fractional diffusion problems \cite{Sakamoto-Yamamoto11} and the introduction of cut-off functions to handle boundary layers and initial data \cite{MR1420948,Zhikov2006}. When the initial data has a better regularity, e.g., $a(x)\in H^1_0(D)$, there is no need to introduce a proper cut-off function for $a(x)$. 

Furthermore, we present in Section \ref{sec:num} a number of numerical tests for $d=2$ to verify Theorem \ref{lem:R2}. Specifically, we test the cases when $\kappa(x)$ is smooth and discontinuous, and when $\kappa(x)$ admits large deviations. 
Our numerical tests demonstrate a higher convergence rate, namely, $\mathcal{O}(\epsilon)$, compared to the theoretical result. We also observe that a larger deviation or discontinuity in the coefficient $\kappa(x)$ results in a larger error.  

The remainder of this paper is organized as follows. In Section \ref{sec:2scale} we present the homogenized equation using two-scale asymptotic expansion for problem \eqref{eq:fine-model}, as well as regularity results on time-fractional diffusion problems. We then introduce auxiliary functions and define the first order approximation in Section \ref{first order approximation}. The main error estimate is derived therein. To verify our theoretical findings, we present in Section \ref{sec:num} extensive numerical tests with smooth (or nonsmooth) diffusion coefficient and with smooth (or nonsmooth) initial data $a(x)$. Finally, we conclude in Section \ref{sec:conclusion} with a summary and discussion of our results.

\section{Two-scale asymptotic expansion}\label{sec:2scale}
This section is concerned with the two-scale asymptotic expansion of  the solution $u^\epsilon$ to \eqref{eq:fine-model}. The approach is standard and can be found, e.g., in \cite{ref:Jikov.1991}. We recall the general procedure for the sake of completeness. 

First, analogous to the standard homogenization theory, we denote $y:=x/\epsilon$ as the fast variable, and $x$ is referred as the slow variable. Let $L^{2}_{\#}(Y):=\{ u\in L^{2}(Y):  u \text{ is $y$-periodic}\}$ with $Y$ being a unit cell in $\mathbb{R}^d$. Similarly, we can define $H^{1}_{\#}(Y)$. Denote $V_{\#}(Y):=\{v\in H^{1}_{\#}(Y):\langle v\rangle=0\}$. The notation $A\lesssim B$ denotes $A\leq C B$ for some constant $C$ independent of the microscale $\epsilon$. Throughout the paper, we follow the Einstein summation convention. 

Under the assumption that the fast variable $y$ and the slow variable $x$ are independent when $\epsilon\to 0$,
we seek for an asymptotic expansion of the solution $u^\epsilon (x,t)$ as follows:
\begin{equation}\label{eq:ansatz}
u^\epsilon(x,t) = u_0(x,y,t) + \epsilon u_1(x,y,t)
+\epsilon^2 u_2(x,y,t) + \cdots
\end{equation}
with the functions $u_j(x,y,t)\in H^{1}_{\#}(Y)$ being periodic in the fast variable $y$ with period 1. The leading order term $u_0(x,y,t)$ is referred as the homogenized solution and the following terms $\epsilon^{k} u_k(x,y,t)$ are the $k^{\rm th}$ order corrector for $u^\epsilon(x,t)$ for $k=1,2,\cdots$. 

Denote by $A^\epsilon$ the second order elliptic operator
\begin{equation*}
A^\epsilon = -
\frac{\partial}{\partial x_i} \left ( \kappa_{ij}\left (x/\epsilon
\right ) \frac {\partial}{\partial x_j} \right ).
\end{equation*}
With this notation, we can expand $A^\epsilon$ as follows
\begin{equation*}
A^\epsilon = \epsilon^{-2} A_1 + \epsilon^{-1} A_2
+\epsilon^{0} A_3 ,
\end{equation*}
where
\begin{eqnarray*}
A_1 &=& - \frac{\partial}{\partial y_i} \left ( \kappa_{ij}(y)
\frac {\partial}{\partial y_j} \right ), \\
A_2 &=& - \frac{\partial}{\partial y_i} \left ( \kappa_{ij}(y)
\frac {\partial}{\partial x_j} \right )
-\frac{\partial}{\partial x_i} \left ( \kappa_{ij}(y)
\frac {\partial}{\partial y_j} \right ), \\
A_3& =& - \frac{\partial}{\partial x_i} \left ( \kappa_{ij}(y)
\frac {\partial}{\partial x_j} \right ) \;.
\end{eqnarray*}
Substituting the expansions for $u^\epsilon$ and $A^\epsilon$
into the differential equation \eqref{eq:fine-model}, and equating the terms with the
same power of $\epsilon$, we get
\begin{eqnarray}
A_1 u_0 & =& 0, \label{eqn:1next}\\
A_1 u_1 &+ & A_2 u_0 = 0, \label{eqn:2next} \\
A_1 u_2 &+ & A_2 u_1 +A_3 u_0 = -  \Dbe u_{0}.
\label{eqn:3next}
\end{eqnarray}
Next we examine these equations one by one. Equation (\ref{eqn:1next}) is equivalent to seeking $u_0(x,y,t)\in H^{1}_{\#}(Y)$, satisfying
\begin{eqnarray*}
- \frac{\partial}{\partial y_i} \left ( \kappa_{ij}(y)
\frac {\partial}{\partial y_j} \right ) u_0(x,y,t)  = 0.
\end{eqnarray*}
The theory of second order elliptic PDEs  implies
that $u_0(x,y,t)$ is independent of $y$. Consequently, we obtain
\begin{align}\label{eq:u0Ny}
u_0(x,y,t) = u_0(x,t).
\end{align}
Since there is no micro-scale $\epsilon$ in either the boundary or the initial conditions, those conditions are imposed on the leading order term $u_{0}$ directly, i.e., 
\begin{equation}\label{u0:initial}
\begin{aligned}
u_{0}(0)&=a(x)&&\text{ in } D\\
u_{0}&=0&& \text{ on }\partial D.
\end{aligned}
\end{equation}

Thanks to \eqref{eq:u0Ny}, the second equation \eqref{eqn:2next} can be formulated as seeking for
$u_1\in H^{1}_{\#}(Y)$, satisfying
\[
- \frac{\partial}{\partial y_i} \left ( \kappa_{ij}(y)
\frac {\partial}{\partial y_j} \right )
u_1 = \left ( \frac{\partial}{\partial y_i} \kappa_{ij}(y)
\right ) \frac {\partial u_0}{\partial x_j}(x,t).
\]
Note that $u_1$ can be defined alternatively independent of the slow variable $x$. To this end,
let $\chi_j\in V_{\#}(Y)$ be the solution to the following {\it cell problem}:
\begin{equation}\label{eq:cellPrb}
 -\frac{\partial}{\partial y_i} \left ( \kappa_{ij}(y)
\frac {\partial}{\partial y_j} \right ) \chi_j =
\frac{\partial }{\partial y_i} \kappa_{ij}(y)  \; \text{ in } Y.
\end{equation}
The general solution of equation (\ref{eq:cellPrb}) for $u_1$
then admits the expression
\begin{equation}\label{eq:corrector}
u_1(x,y,t) = \chi_j(y) \frac{\partial u_0}{\partial x_j}(x,t) +
\tilde{u}_1(x,t) \; .
\end{equation}
For simplicity, we take $\tilde{u}_1(x,t)=0$.

Finally, we deal with the third term $u_2$. Because of \eqref{eqn:3next}, we can seek for $u_2\in H^{1}_{\#}(Y)$, such that,
\begin{equation}
\frac{\partial}{\partial y_i} \left ( \kappa_{ij}(y)
\frac {\partial}{\partial y_j} \right ) u_2 =
A_2 u_1 + A_3 u_0 +\Dbe u_{0}\;. \label{eqn:u2next}
\end{equation}
The solvability condition implies that the right hand side
of (\ref{eqn:u2next}) must have mean zero in $y$ over the unit cell $Y=[0,1]^d $, i.e. 
\[
\int_Y ( A_2 u_1 + A_3 u_0+\Dbe u_{0}) \,\dy = 0 .
\]
We note that
\[
\int_Y {\partial \over \partial y_i} F(x,y,t) \,\dy = 0
\]
for any $F(x,y,t)$ which is periodic with respect to $y$.
This can be easily verified using the Divergence Theorem. After integrating (\ref{eqn:u2next}) over $Y$, 
the average over the terms starting with ${\partial \over\partial y_i}$ disappears and we arrive at 
\[
\Dbe u_{0} - \frac{\partial}{\partial x_i} \left ( \langle \kappa_{ij}(y)\rangle
\frac {\partial}{\partial x_j} u_0 \right )  -
\frac{\partial}{\partial x_i} \left ( \langle \kappa_{ij}(y)
\frac {\partial}{\partial y_j} u_1\rangle \right ) =0.
\]
Substituting the expression for $u_1$ into this equation, we obtain the homogenized equation:
\begin{equation}\label{eqn:homnext}
 \Dbe u_{0}  -\frac{\partial}{\partial x_i} \left ( \kappa_{ij}^*
\frac {\partial}{\partial x_j} \right ) u_0
= 0 \;,
\end{equation}
where
\begin{equation}
\kappa_{ij}^* = \frac{1}{|Y|} \left ( \int_Y \kappa_{ij} +\kappa_{ik}
\frac{\partial \chi_j}{\partial y_k}  \,\dy \right ).
\label{eqn:coefnext}
\end{equation}
Last, we derive the {\em a priori} estimate for the homogenized solution $u_0$, which will be utilized below. We obtain from \eqref{u0:initial} and \eqref{eqn:homnext} that
\begin{equation}\label{eq:u0}
\left\{
\begin{aligned}
 \Dbe u_{0}  &=\frac{\partial}{\partial x_i} \left ( \kappa_{ij}^*
\frac {\partial}{\partial x_j} \right ) u_0&&\text{ in } D, \quad t\in \left( 0,T\right]\\
u_{0}&=0&& \text{ on }\partial D, \;\; t\in \left( 0,T\right]\\
u_{0}(0)&=a(x) &&\text{ in } D.
\end{aligned}
\right.
\end{equation}
Then by application of \cite[Theorem 2.1]{Sakamoto-Yamamoto11}, we obtain that $u_0\in C([0,T];L^2(D))\cap C((0,T];H^2(D)\cap H^1_0(D))$. Furthermore, the following estimate holds:
\begin{align}\label{eq:apriori1}
t^{\alpha}\norm{u_0(\cdot,t)}_{H^2(D)}+\norm{u_0(\cdot,t)}_{L^2(D)}+t^{\alpha}\normL{\Dbe u_0(\cdot,t)}{D}&\lesssim \norm{a}_{L^2(D)}, &&\text{ for all }t\in (0,T].
\end{align}
Estimate \eqref{eq:apriori1} implies that the homogenized solution $u_0(x,t)$ is singular at $t=0$. The strength of this singularity depends on the regularity of the initial data $a(x)$. 
This singularity can disappear when the initial data $a(x)$ has certain regularity. Furthermore, the regularity of the solution $u_0$ from time-fractional diffusion problem has only limited regularity over the space domain $D$, c.f. \cite{jin2019numerical}. 
 
\begin{remark}
Note that \cite[Theorem 2.1]{Sakamoto-Yamamoto11} still holds when the permeability coefficient $\kappa^{\eps}$ admits high oscillation. Furthermore, the {\it a priori} estimate \eqref{eq:apriori1} is stable with respect to the parameter $\eps$. 
\end{remark}
\section{First order approximation estimate}\label{first order approximation}
We present in this section the first order approximation to $u^{\eps}$, and then derive its error estimate.

To this end, we will first provide the {\em a priori} estimate to the solutions of the cell problem  \eqref{eq:cellPrb}. This result can be found, e.g., in \cite[Section 8]{pastukhova2010estimates}. For the completeness, we also present the proof:
\begin{lemma}
Let $\chi_j$ be the solution to the cell problem  \eqref{eq:cellPrb} for all $j=1,\cdots,d$. Then there holds
\begin{align}
\norm{\nabla \chi_j}_{L^{2}(Y)}&\leq \mu^{-1} \norm{\kappa}_{L^{2}(Y)},&&\label{ineq:chi}\\
\|\chi_j\|_{H^{2}(Y)}+\|\chi_j\|_{L^{\infty}(Y)}&\lesssim 1.&&\label{eq:chi_Hm}
\end{align}
\end{lemma}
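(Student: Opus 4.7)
For the gradient bound \eqref{ineq:chi}, the plan is to test the cell problem \eqref{eq:cellPrb} against $\chi_j$ itself. Since $\chi_j \in V_\#(Y)$ is $Y$-periodic, integration by parts on both sides produces no boundary contributions, giving
\[
\int_Y \kappa_{il}(y) \frac{\partial \chi_j}{\partial y_l}\frac{\partial \chi_j}{\partial y_i} \, \dy = -\int_Y \kappa_{ij}(y)\frac{\partial \chi_j}{\partial y_i} \, \dy .
\]
On the left I apply uniform ellipticity to bound the quadratic form below by $\mu \|\nabla \chi_j\|_{L^2(Y)}^2$, and on the right I apply Cauchy--Schwarz, estimating the column $(\kappa_{ij})_{i=1}^d$ by $\|\kappa\|_{L^2(Y)}$. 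Dividing through by $\|\nabla \chi_j\|_{L^2(Y)}$ yields \eqref{ineq:chi}. Note the Poincaré inequality for mean-zero periodic functions then also controls $\|\chi_j\|_{L^2(Y)}$.

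For the $H^2$ estimate in \eqref{eq:chi_Hm}, I would rewrite \eqref{eq:cellPrb} in non-divergence form using smoothness of $\kappa_{ij}$:
\[
-\kappa_{il}(y)\frac{\partial^2 \chi_j}{\partial y_i \partial y_l} = \frac{\partial \kappa_{il}}{\partial y_i}\frac{\partial \chi_j}{\partial y_l} + \frac{\partial \kappa_{ij}}{\partial y_i} .
\]
Since $\kappa_{ij} \in C^\infty$, the right-hand side lies in $L^2(Y)$ once we already know $\chi_j \in H^1(Y)$, with norm controlled by $\|\nabla \chi_j\|_{L^2(Y)} + 1 \lesssim 1$ by the first estimate. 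The plan is then to invoke standard elliptic $H^2$ regularity on the periodic cell (working on the flat torus so there are no boundary obstructions; alternatively Nirenberg's difference-quotient method applied to the weak formulation), which gives $\|\chi_j\|_{H^2(Y)} \lesssim \|\nabla\chi_j\|_{L^2(Y)} + \|\nabla\kappa\|_{L^\infty(Y)}(1 + \|\nabla\chi_j\|_{L^2(Y)}) \lesssim 1$.

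Finally, the $L^\infty$ bound follows from Sobolev embedding: for $d \leq 3$ one has $H^2(Y) \hookrightarrow C(\overline Y)$, hence $\|\chi_j\|_{L^\infty(Y)} \lesssim \|\chi_j\|_{H^2(Y)} \lesssim 1$. I expect the main subtlety to be justifying the $H^2$ step cleanly: although standard, it relies on the smooth periodic structure, and one must be careful that the implied constants depend only on $\mu$ and $\|\kappa\|_{C^1(\overline Y)}$, not on any boundary conditions, so that subsequent arguments inherit $\epsilon$-independent bounds.
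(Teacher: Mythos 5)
Your proof is correct and follows essentially the same route as the paper: the energy estimate \eqref{ineq:chi} is obtained identically (test the weak form with $\chi_j$, ellipticity below, Cauchy--Schwarz above), and the $H^2$ bound likewise rests on standard elliptic regularity for the smooth periodic problem. The only genuine difference is the last step: the paper deduces $\|\chi_j\|_{L^\infty(Y)}\lesssim 1$ from the weak maximum principle, whereas you use the Sobolev embedding $H^2(Y)\hookrightarrow C(\overline Y)$, valid since $d\le 3$. Your route is arguably the cleaner of the two here, since the cell problem has a nonzero (divergence-form) right-hand side, so a literal maximum principle does not apply directly and one would otherwise need a De Giorgi--Nash--Moser or Stampacchia-type argument; the embedding sidesteps this entirely once the $H^2$ bound is in hand, at the cost of being restricted to $d\le 3$ (which is all the paper needs).
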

\begin{proof}
Let $e_j$ be the $j^{\rm th}$ canonical unit vector in $\mathbb{R}^d$. The weak formulation associated to Problem \eqref{eq:cellPrb} is to seek $\chi_j\in V_{\#}(Y)$ such that
\[
\int_Y \kappa(y) \nabla \chi_j\cdot \nabla v\,\dy=-\int_Y \kappa (y) e_j \cdot\nabla v\,\dy, \text{ for all } v\in V_{\#}(Y).
\]
Testing with $v:=\chi_j$, we obtain 
\[
\int_Y \kappa(y) \nabla \chi_j \cdot\nabla \chi_j\,\dy=-\int_Y \kappa (y) e_j  \cdot\nabla\chi_j\,\dy.
\]
Then the boundedness of $\kappa(y)$ and an application of the H\"{o}lder's inequality reveal the assertion \eqref{ineq:chi}.

Furthermore, by application of \cite{grisvard2011elliptic,MR0241822}, we obtain
\begin{align}\label{eq:chi_H2}
\|\chi_j\|_{H^2(Y)}\lesssim \|\nabla\cdot(\kappa\nabla\chi_j)\|_{L^2(Y)}+\|\Delta\chi_j\|_{L^2(Y)}\lesssim 1.
\end{align}
This, together with the weak maximum principle, yields the second assertion \eqref{eq:chi_Hm}.
\end{proof}
Note that both the exact solution $u^{\epsilon}$ and the homogenized solution $u_0$ satisfy a homogeneous Dirichlet boundary condition.
However, the first order corrector $\epsilon \chi_j({x}/{\epsilon}) \frac{\partial u_0}{\partial x_j}(x,t)$ admits $\epsilon$-oscillation over the global boundary $\partial D$. To account for this fact, we must
adapt the corrector near the boundary. We employ an approach developed in \cite{MR1420948}.

To this end, we introduce the cut-off function $\zeta^{\ep}$ corresponding to $\partial \Omega$.
Here $\zeta^{\ep}=1$ on $\partial D$ and $\mbox{supp}(\zeta^{\ep})\subset \{x\in \bar{D}:\; \mbox{dist}(x,\partial D)\leq \ep\}$.
With regularity condition $\zeta^{\ep}\in C^{2}\bar{(D)}$, for $0\leq \ell \leq 2$, we have
\begin{align}
\label{eq:zetaestimate}
||D^{\ell}\zeta^{\epsilon}||_{L^{\infty}(D)}\lesssim \ep^{-\ell}.
\end{align}
In addition, using (\ref{eq:zetaestimate}) and the fact that $|\mbox{supp}(\zeta^{\eps})|= O(\eps)$, and
also by applying H\"older's inequality, we have the following estimate for derivatives of the cut-off function $\zeta^{\ep}$ in $L^{q}({D})$ given by
\begin{equation}
\label{eq:qzetaestimate}
 \normq{D^{\ell}\zeta^{\eps}}{D}\leq C|\mbox{supp}(\zeta^{\eps})|^{1/q}\normI{D^{\ell}\zeta^{\eps}}{D}\lesssim \eps^{1/q-\ell}.
\end{equation}
To avoid higher regularity condition on the initial data $a(x)$, we employ the trick in \cite{Zhikov2006} and introduce another cutoff function $\eta(t;\theta)$ in the time domain $[0,T]$ for any parameter $\theta\in (0,T]$. It is defined as follows:
\begin{equation*}
\eta(t;\theta)=
\left\{
\begin{aligned}
&0, &&\text{ for }t\leq \theta/2\\
&1, &&\text{ for }t\geq \theta\\
&\text{linear}&& \text{ for }t\in (\theta/2,\theta).
\end{aligned}
\right.
\end{equation*}
Let the first order approximation to $u^{\epsilon}(x,t)$, and its modification be defined by 
\begin{equation}\label{eq:firstapprox}
\left\{
\begin{aligned}
U_1^{\epsilon}(x,t)&:=u_0(x,t) +\epsilon\chi_j({x}/{\epsilon}) \frac{\partial u_0}{\partial x_j}\\
u_1^{\epsilon}(x,t;\theta)&:=u_0(x,t) +\epsilon\eta(t;\theta) (1-\zeta^{\ep})\chi_j({x}/{\epsilon}) \frac{\partial u_0}{\partial x_j}, \text{ for any }\theta\in (0,T].
\end{aligned}
\right.
\end{equation}
Then by definition, 
\begin{align*}
u_1^{\epsilon}(x,t;\theta)&=0&&\text{on }\partial D, t\in (0,T] \\
 u_1^{\epsilon}(x,0;\theta)& =a(x)&&\text{ in } D.
\end{align*}
Denote $u^\epsilon=u_1^{\epsilon}+R(x,t;\theta)$, where $R(x,t;\theta)$ is the residual between the exact solution $u^\epsilon(x,t)$ and its modified first order approximation. Then plugging this expression of $u^{\epsilon}$ into \eqref{eq:fine-model} yields
\begin{align*}
\Dbe \Big (u_1^{\epsilon}+R(x,t;\theta) \Big)
 =\nabla\cdot \Big(\kappa({x/\epsilon})\nabla (u_1^{\epsilon}+R(x,t;\theta)) \Big) \text{ in } D.
\end{align*}
Collecting the terms and using the homogenized equation \eqref{eqn:homnext},
the boundary and initial conditions for $u_{0}(x,t)$, we arrive at the following error equation
\begin{equation}\label{eq:residual}
\left\{
\begin{aligned}
\Dbe R &=\nabla\cdot(\kappa({x/\epsilon})\nabla R)+f_1&&\text{ in } D, \quad t\in \left( 0,T\right] \\
R&= 0 &&\text{ on }\partial D,\;t\in \left( 0,T\right]  \\
R(x,0;\theta)&=0  &&\text{ in } D.
\end{aligned}
\right.
\end{equation}
Here,
\[
f_1(x,t;\theta):=\nabla\cdot \big(\kappa({x/\epsilon})\nabla u_1^{\epsilon} \big)-\Dbe u_1^{\epsilon}.
\]
\begin{remark}[Regularity of the modified first order approximation \eqref{eq:firstapprox}]
Thanks to the condition $a\in  L^2(D)$, we can obtain $u_{1}^{\epsilon}(x,t;\theta)\in C((0,T]; H^1_0(D))$ for all $\theta\in (0,T]$. 
\end{remark}
A fundamental ingredient of  the homogenization theory  will be a proper {\em a priori} estimate for the residual $R(x,t;\theta)$.
%
To this end, we
first estimate the term $f_1$:
\begin{lemma}\label{lemma:meanValue}
For all $s\in (0,T]$, $f_1\in C([s,T];H^{-1}(D))$. Moreover, there holds 
\begin{equation*}
\begin{aligned}
\norm{f_1}_{H^{-1}(D)}\lesssim \epsilon^{\min\{1/2,2/d-1/2\}}t^{-\alpha}\|a\|_{L^2(D)} \text{ for all } t\in [s,T].
\end{aligned}
\end{equation*}
\end{lemma}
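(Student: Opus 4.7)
My plan is to expand $f_1 = \nabla\cdot(\kappa^{\epsilon}\nabla u_1^{\epsilon}) - \Dbe u_1^{\epsilon}$ by direct substitution and split it into a bulk contribution and a boundary/initial layer correction. Concretely, write $u_1^{\epsilon} = U_1^{\epsilon} - \epsilon\,\Psi$ with the ``uncut'' first-order approximation $U_1^{\epsilon} := u_0 + \epsilon\, \chi_j(x/\epsilon)\partial_j u_0$ and $\Psi := [1-\eta(t;\theta)(1-\zeta^{\epsilon})]\chi_j(x/\epsilon)\partial_j u_0$, so that $\Psi$ is supported in $\{\mathrm{dist}(x,\partial D)\le\epsilon\}\cup\{t\le\theta\}$. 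Then $f_1 = R_{\mathrm{bulk}} + R_{\mathrm{cut}}$ with $R_{\mathrm{bulk}} := \nabla\cdot(\kappa^{\epsilon}\nabla U_1^{\epsilon})-\Dbe U_1^{\epsilon}$ and $R_{\mathrm{cut}} := -\epsilon\nabla\cdot(\kappa^{\epsilon}\nabla\Psi)+\epsilon\,\Dbe\Psi$. I would fix $\theta\le s$ so that $\eta(t;\theta)=1$ for all $t\in[s,T]$.

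For $R_{\mathrm{bulk}}$, the cell problem \eqref{eq:cellPrb} gives $\kappa_{li}^{\epsilon}\partial_i U_1^{\epsilon} = b_{lj}^{\epsilon}\partial_j u_0 + \epsilon\,\kappa_{li}^{\epsilon}\chi_j^{\epsilon}\partial_i\partial_j u_0$, where $b_{lj}(y) := \kappa_{lj}(y) + \kappa_{li}(y)\partial_{y_i}\chi_j(y)$ satisfies $\partial_{y_l}b_{lj} = 0$ and $\langle b_{lj}\rangle = \kappa_{lj}^{*}$. Introducing the antisymmetric flux corrector $N_{klj}\in V_{\#}(Y)$ with $\partial_{y_k}N_{klj}=b_{lj}-\kappa_{lj}^{*}$ and $N_{klj}=-N_{lkj}$ (a standard extra cell problem with $\|N\|_{L^{\infty}(Y)}\lesssim 1$), the term $(b_{lj}^{\epsilon}-\kappa_{lj}^{*})\partial_l\partial_j u_0$ is rewritten as an $\epsilon$-small divergence, since the antisymmetry of $N$ against symmetric mixed partials eliminates the obstructing third-derivative term. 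Using $\Dbe(\chi_j^{\epsilon}\partial_j u_0) = \chi_j^{\epsilon}\partial_j\Dbe u_0$ (the spatial corrector is $t$-independent) and $\Dbe u_0 = \kappa_{ij}^{*}\partial_i\partial_j u_0$ from \eqref{eqn:homnext}, $R_{\mathrm{bulk}}$ collapses to divergence-form $O(\epsilon)$ terms plus $-\epsilon\chi_j^{\epsilon}\partial_j\Dbe u_0$. The $H^{-1}(D)$ bound is then $\lesssim \epsilon(\|u_0\|_{H^2(D)}+\|\Dbe u_0\|_{L^2(D)})\lesssim \epsilon\,t^{-\alpha}\|a\|_{L^2(D)}$ by \eqref{eq:chi_Hm} and \eqref{eq:apriori1}.

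For $R_{\mathrm{cut}}$, the chain rule applied to $\chi_j(x/\epsilon)$ inside $\nabla\Psi$ produces a factor of $\epsilon^{-1}$, so after multiplying by the overall $\epsilon$ the dominant term is $[1-\eta(1-\zeta^{\epsilon})]\,(\nabla_y\chi_j)^{\epsilon}\partial_j u_0$. I would bound $\|\nabla\cdot(\kappa^{\epsilon}\nabla\Psi)\|_{H^{-1}(D)}$ by $\|\kappa^{\epsilon}\nabla\Psi\|_{L^2(D)}$ and apply H\"older's inequality with three factors, using $\|[1-\eta(1-\zeta^{\epsilon})]\|_{L^{r_1}(D)}\lesssim \epsilon^{1/r_1}$ from \eqref{eq:qzetaestimate} (after fixing $\theta\le s$), $\|(\nabla_y\chi_j)^{\epsilon}\|_{L^{r_2}(D)}\lesssim 1$ uniformly in $\epsilon$ from periodicity and $\chi_j\in H^2(Y)$ via \eqref{eq:chi_Hm}, and $\|\nabla u_0\|_{L^{r_3}(D)}\lesssim \|u_0\|_{H^2(D)}$ by Sobolev embedding (with $r_3\le 6$ when $d=3$, $r_3<\infty$ when $d\le 2$). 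Subject to $1/r_1+1/r_2+1/r_3 = 1/2$, the optimal balance is $r_1=r_2=r_3=6$ when $d=3$ (yielding $\epsilon^{1/6}$) and $r_1\to 2^{+}$ with $r_2,r_3\to\infty$ when $d\le 2$ (yielding $\epsilon^{1/2}$), producing the asserted $\epsilon^{\min\{1/2,2/d-1/2\}}t^{-\alpha}\|a\|_{L^2(D)}$.

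The main obstacle will be the temporal piece $\epsilon\,\Dbe\Psi$, because the Caputo derivative has no product rule. My workaround is to pull the $t$-independent factors $\zeta^{\epsilon}(x)$ and $\chi_j^{\epsilon}(x)$ outside $\Dbe$, reducing to $\Dbe[\eta\,\partial_j u_0]$ and $\Dbe[(1-\eta)\partial_j u_0]$. I would then insert the Caputo integral definition of $\Dbe$ from the introduction and apply the classical product rule inside the integrand, writing $\partial_r[\eta(r)\partial_j u_0(x,r)] = \eta'(r)\partial_j u_0(x,r)+\eta(r)\partial_r\partial_j u_0(x,r)$. With $\theta\le s$ and $t\in [s,T]$, the $\eta'$-contribution is integrated over $r\in[\theta/2,\theta]$ disjoint from $t$ (where $(t-r)^{-\alpha}$ stays bounded), and the remaining piece is handled by commuting $\Dbe$ with $\partial_j$, by the Volterra relation between $\partial_r u_0$ and $\Dbe u_0$, and by \eqref{eq:apriori1}. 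These bounds are at worst $O(\epsilon\,t^{-\alpha}\|a\|_{L^2(D)})$ and are absorbed by the spatial contribution above.
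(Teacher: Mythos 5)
Your proposal is correct and follows essentially the same route as the paper: the same three-way split into a bulk two-scale residual (the standard flux-corrector argument, which you spell out and the paper delegates to \cite{ref:Jikov.1991}), a boundary-cutoff term estimated by H\"older with $\|\zeta^\epsilon\|_{L^q(D)}\lesssim\epsilon^{1/q}$, the corrector bounds \eqref{ineq:chi}--\eqref{eq:chi_Hm} and the embedding $H^2(D)\hookrightarrow W^{1,r}(D)$ --- exactly the source of the $\epsilon^{\min\{1/2,2/d-1/2\}}$ rate --- and a fractional-time term controlled by \eqref{eq:apriori1} after pulling the $t$-independent factors out of $\Dbe$. The one substantive difference is in your favor: where the paper in effect commutes $\eta$ past the fractional derivative in the $T_2$ step, you correctly flag that the Caputo derivative has no product rule and handle the $\eta'$ memory contribution from $[\theta/2,\theta]$ explicitly via the integral definition, which makes that step cleaner.
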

\begin{proof}
The following identity holds
\begin{equation*}
\begin{aligned}
f_1=\nabla\cdot(\kappa({x/\epsilon})\nabla u_1^{\epsilon})  - \Dbe u_1^{\epsilon}
 =  &\nabla\cdot(\kappa({x/\epsilon})\nabla u_1^{\epsilon})
   -\nabla\cdot(\kappa^{*}\nabla u_{0}) \\
&+\nabla\cdot(\kappa^{*}\nabla u_{0})-\Dbe u_1^{\epsilon}  \\
&:=T_{1}+T_{2}.
\end{aligned}
\end{equation*}
To estimate $\norm{f_1}_{H^{-1}(D)}$, we only need to estimate $\norm{T_1}_{H^{-1}(D)}$ and $\norm{T_2}_{H^{-1}(D)}$. To this end, we can further split the first term into the summation of the two terms
\[
T_1:=\nabla\cdot(\kappa({x/\epsilon})\nabla u_1^{\epsilon})-  \nabla\cdot(\kappa^{*}\nabla u_{0})
:=T_{1,1}+T_{1,2}
\]
with
\begin{align*}
T_{1,1}&:=\nabla\cdot\Bigg(\kappa(\frac{x}{\epsilon})\nabla \Big(u_0+\epsilon \eta(t;\theta)\chi_j(\frac{x}{\epsilon}) \frac{\partial u_0}{\partial x_j} \Big)\Bigg) -\nabla\cdot(\kappa^{*}\nabla u_{0})\\
T_{1,2}&:=-\epsilon\eta(t;\theta)\nabla\cdot\Bigg(\kappa(\frac{x}{\epsilon})\nabla \Big( \zeta^\epsilon\chi_j(\frac{x}{\epsilon}) \frac{\partial u_0}{\partial x_j} \Big) \Bigg).
\end{align*}
To estimate $T_{1,1}$, we apply the argument in \cite[Section 1.4]{ref:Jikov.1991}, together with the definition of the cutoff function $\eta(t;\theta)$, and obtain 
\begin{equation}\label{termt11}
\begin{aligned}
\norm{T_{1,1}}_{H^{-1}(D)}\lesssim {\epsilon}t^{-\alpha/2}\|a\|_{L^2(D)} \text{ for any } \theta\in (0,T] \text{ and } t\geq\theta.
\end{aligned}
\end{equation}
Then we estimate $T_{1,2}$. A direct calculation results in
\begin{equation*}
\begin{aligned}
\norm{T_{1,2}}_{H^{-1}(D)}:=&\norm{-\epsilon\eta(t;\theta)\nabla\cdot\Bigg(\kappa(\frac{x}{\epsilon})\nabla \Big( \zeta^\epsilon\chi_j(\frac{x}{\epsilon}) \frac{\partial u_0}{\partial x_j} \Big) \Bigg)}_{H^{-1}(D)}\\
\leq & \epsilon\norm{\kappa(\frac{x}{\epsilon})\nabla \Big( \zeta^\epsilon\chi_j(\frac{x}{\epsilon}) \frac{\partial u_0}{\partial x_j} \Big) }_{L^{2}(D)}.
\end{aligned}
\end{equation*}
Then by the triangle inequality and the chain rule, we deduce
\begin{align*}
\norm{T_{1,2}}_{H^{-1}(D)}
\lesssim \epsilon \|\nabla  \zeta^\epsilon  \|_{L^{2}(D)}\Big\|\chi_j(\frac{x}{\epsilon}) \frac{\partial u_0}{\partial x_j}\Big\|_{L^{2}(D)}
&+ \epsilon\Big\| \zeta^\epsilon\chi_j(\frac{x}{\epsilon}) \nabla\frac{\partial u_0}{\partial x_j}\Big\|_{L^{2}(D)}\\
&+ \Big\|\zeta^\epsilon\nabla \chi_j({\frac{x}{\epsilon}}) \frac{\partial u_0}{\partial x_j}\Big\|_{L^{2}(D)}.
\end{align*}
This, together with the generalized H\"{o}lder's inequality and Sobolev embedding, leads to
\begin{align*}
\norm{T_{1,2}}_{H^{-1}(D)}
\lesssim & \epsilon^{1/2}\|\chi_j\|_{L^{\infty}(Y)}\norm{ \frac{\partial u_0}{\partial x_j}  }_{L^{2}(D)}
+ \epsilon\norm{ \zeta^\epsilon}_{L^{2}(D)} \norm{\chi_j}_{L^{2}(Y)} \norm{ u_0}_{H^{2}(D)}\\
&\hspace{4,5cm}+\norm{ \zeta^\epsilon}_{L^{q}(D)} \norm{\nabla \chi_j}_{L^{r}(Y)} \norm{\frac{\partial u_0}{\partial x_j}  }_{L^r(D)}
\end{align*}
where
\begin{equation*}
\frac{1}{r}=
\left\{
\begin{aligned}
&0, &&\text{ when } d=1\\
&\delta, &&\text{ when } d=2\\
&\frac{1}{2}-\frac{1}{d}, &&\text{ when } d=3
\end{aligned}
\right.
\qquad\text{and} \qquad
\frac{1}{q}=
\left\{
\begin{aligned}
&\frac{1}{2}, &&\text{ when } d=1\\
&\frac{1}{2}-2\delta, &&\text{ when } d=2\\
&\frac{2}{d}-\frac{1}{2}, &&\text{ when } d=3.
\end{aligned}
\right.
\end{equation*}
Here, the parameter $\delta\in (0,1/2)$ is an arbitrary constant, and $d$ is the dimension of the domain $D$.

Together with the estimates \eqref{ineq:chi}, \eqref{eq:qzetaestimate} and \eqref{eq:apriori1}, by letting $\delta\to 0$, we obtain
\begin{align}\label{termt12}
\norm{T_{1,2}}_{H^{-1}(D)}
\lesssim \eps^{\min\{1/2,2/d-1/2\}}t^{-\alpha}\norm{a}_{L^{2}(D)}.
\end{align}
Next we estimate the second term $T_2$. The estimate \eqref{eq:u0} implies
\begin{align*}
T_{2}&:=\nabla\cdot \big(\kappa^{*}\nabla u_{0} \big)-  \Dbe u_1^{\epsilon}
= \eta(s;t)\Dbe (u_{0}-u_1^{\epsilon})\\
&=\epsilon (\zeta ^{\epsilon}-1) \chi_j(\frac{x}{\epsilon})\Dbe  \frac{\partial u_0}{\partial x_j}.
\end{align*}
 By exchanging the fractional derivative with respect to time $t$ and the derivative with respect to the space variable $x$,
we obtain
\begin{align*}
T_{2}=\epsilon (\zeta ^{\epsilon}-1)  \chi_j(\frac{x}{\epsilon})\frac{\partial }{\partial x_j}\Dbe u_0.
\end{align*}
Then by application of \eqref{eq:apriori1}, we arrive at
\begin{align*}
\|T_2\|_{H^{-1}(D)}\lesssim \epsilon t^{-\alpha}\|a\|_{L^2(D)}.
\end{align*}
This, together with \eqref{termt11}, \eqref{termt12} and an application of the triangle inequality,  proves the desired assertion after letting $\theta\to s$.
\end{proof}

Finally, we are ready to present the error estimate for the residual $R$ defined in the error equation \eqref{eq:residual}:

\begin{theorem}[Modified first order approximation]\label{lem:R2}
For any $p\in [1,\frac{1}{\alpha})$ and $s\in (0,T]$, there exists a unique weak solution
$R\in {L}^{p}([s,\,T]; {H}^{1}_{0}(D))$ to Problem \eqref{eq:residual} such that $\Dbe R\in {L}^{p}([s,\,T]; {H}^{-1}(D))$, satisfying
\begin{align}\label{estimate:R_2}
\norm{R}_{L^{p}([s,\,T]; H^{1}(D))}+\norm{\Dbe R}_{{L}^{p}([s,\;T]; {H}^{-1}(D))}
\lesssim \epsilon^{\min\{1/2,2/d-1/2\}}\|a\|_{L^2(D)}.
\end{align}
\end{theorem}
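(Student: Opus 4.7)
The plan is to exploit maximal $L^p$-regularity for the time-fractional parabolic problem \eqref{eq:residual}, combined with the forcing bound of Lemma \ref{lemma:meanValue}.

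First, I would verify that $f_1$ lies in $L^p([s,T]; H^{-1}(D))$ with the desired $\eps$-dependence. By Lemma \ref{lemma:meanValue},
\[
\|f_1(\cdot,t)\|_{H^{-1}(D)} \lesssim \eps^{\min\{1/2,\,2/d-1/2\}}\,t^{-\alpha}\|a\|_{L^2(D)}, \qquad t \in [s,T],
\]
and the hypothesis $p < 1/\alpha$ is precisely what makes the factor $t \mapsto t^{-\alpha}$ belong to $L^p([s,T])$. Consequently
\[
\|f_1\|_{L^p([s,T]; H^{-1}(D))} \lesssim \eps^{\min\{1/2,\,2/d-1/2\}}\,\|a\|_{L^2(D)},
\]
with implicit constant depending on $s,T,\alpha,p$ but independent of $\eps$. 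This is exactly the reason the theorem restricts to $p<1/\alpha$.

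Next, I would recast \eqref{eq:residual} as an abstract time-fractional evolution $\Dbe R + A^{\eps} R = f_1$ with $R(0)=0$, where $A^{\eps}:=-\nabla\cdot(\kappa(x/\eps)\nabla\cdot)$ equipped with homogeneous Dirichlet data. Uniform ellipticity makes $A^\eps\colon H^1_0(D)\to H^{-1}(D)$ an isomorphism with $\eps$-independent norms, and on $L^2(D)$ the operator is self-adjoint and nonnegative. The Duhamel--Mittag-Leffler representation
\[
R(t) \;=\; \int_0^t (t-\tau)^{\alpha-1}\, E_{\alpha,\alpha}\bigl(-(t-\tau)^\alpha A^{\eps}\bigr)\, f_1(\tau)\, d\tau,
\]
together with a maximal $L^p$-regularity statement for fractional evolution equations driven by a self-adjoint nonnegative generator with bounded $H^{\infty}$-functional calculus (in the spirit of Bazhlekova's extension of the Pr\"uss--Sohr theorem), then delivers
\[
\|\Dbe R\|_{L^p([s,T]; H^{-1}(D))} + \|R\|_{L^p([s,T]; H^{1}(D))} \;\lesssim\; \|f_1\|_{L^p([s,T]; H^{-1}(D))},
\]
and combining with the first step yields \eqref{estimate:R_2}. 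Existence and uniqueness of the weak solution in the stated class follow from a standard Galerkin approximation in the eigenbasis of $A^\eps$.

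The main obstacle I foresee is ensuring that the maximal-regularity constant is \emph{uniform in} $\eps$, since the generator $A^{\eps}$ itself depends on the small parameter. For the self-adjoint nonnegative operator $A^\eps$ on $L^2(D)$, all relevant quantities (angle and bound of the $H^{\infty}$-calculus, coercivity threshold via Poincar\'e) depend only on the ellipticity constant $\mu$ and on the geometry of $D$, so uniformity reduces to the scalar Mittag-Leffler bound $|E_{\alpha,\alpha}(-z)|\leq C_\alpha/(1+z)$ applied spectrally. A secondary subtlety is the endpoint $p=1$, where abstract maximal $L^p$-regularity can fail; but the \emph{pointwise} control $\|f_1(t)\|_{H^{-1}}\lesssim \eps^{\,\min\{1/2,2/d-1/2\}}t^{-\alpha}\|a\|_{L^2(D)}$ is sharper than mere $L^1$-integrability, and a direct kernel estimate via the representation above recovers the needed $L^1$-in-time bound.
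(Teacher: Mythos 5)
Your proposal is correct and follows essentially the same route as the paper: first convert the pointwise bound of Lemma \ref{lemma:meanValue} into an $L^p([s,T];H^{-1}(D))$ bound on $f_1$ (using $p<1/\alpha$ to integrate $t^{-\alpha}$), then invoke a maximal $L^p$-regularity estimate $\norm{R}_{L^p(H^1)}+\norm{\Dbe R}_{L^p(H^{-1})}\lesssim\norm{f_1}_{L^p(H^{-1})}$ for the fractional evolution with zero initial data. The paper delegates this second step to the proofs in Sakamoto--Yamamoto and Jin--Lazarov--Pasciak--Zhou, whose arguments are exactly the Duhamel--Mittag-Leffler representation and spectral estimates you spell out; your explicit attention to $\eps$-uniformity of the constants and to the endpoint $p=1$ addresses points the paper leaves implicit.
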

\begin{proof}
First, Lemma \ref{lemma:meanValue} gives
\[
\norm{f_1}_{{H}^{-1}(D)}\lesssim \epsilon^{\min\{1/2,2/d-1/2\}}t^{-\alpha}\|a\|_{L^2(D)}\quad\text{ for all }t\in \left[s,T\right].
\]
Therefore, $f_1\in L^{p}([s,T]; H^{-1}(D))$ for any $p\in [1,\frac{1}{\alpha})$, and there holds
\[
\norm{f_1}_{L^p([s,T];{H}^{-1}(D))}\lesssim \epsilon^{\min\{1/2,2/d-1/2\}}\|a\|_{L^2(D)}.
\]
Meanwhile, following the proof of \cite[Theorems 2.1 and 2.2]{Sakamoto-Yamamoto11} and \cite[Theorem 2.3]{JLPZ14}, we can arrive at
\begin{align*}
\norm{R}_{L^{p}([s,\;T]; H^{1}(D))}+\norm{\Dbe R}_{{L}^{p}([s,\;T]; {H}^{-1}(D))}
\lesssim \norm{f_1}_{{L}^{p}([s,\;T]; {H}^{-1}(D))}.
\end{align*}
Consequently, a combination of the previous two estimates proves the desired assertion.
\end{proof}
We present in the next result the error estimate with the boundary layer effect:
\begin{corollary}[First order approximation]\label{coro:1order}
Let $a(x)\in L^2(D)$, $p\in [1,\frac{1}{\alpha})$, $s\in (0,T]$ and $\epsilon$ sufficiently small. Let $u^{\epsilon}$ be the solution to Problem \eqref{eq:fine-model}, then the following estimate holds 
\begin{align*}
\norm{u^{\epsilon}-U_1^{\epsilon}(x,t)}_{L^{p}([s,\,T]; H^{1}(D))}+\norm{\Dbe \Big( u^{\epsilon}-U_1^{\epsilon}(x,t)\Big)}_{{L}^{p}([s,\;T]; {H}^{-1}(D))}\lesssim \epsilon^{\min\{1/2,2/d-1/2\}}\|a\|_{L^2(D)}.
\end{align*}
\end{corollary}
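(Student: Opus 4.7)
The plan is to reduce Corollary \ref{coro:1order} to Theorem \ref{lem:R2} by controlling the discrepancy between the raw first order approximation $U_1^{\epsilon}$ and the modified version $u_1^{\epsilon}(\,\cdot\,;\theta)$. I would fix the time-cutoff parameter $\theta\in(0,s]$, say $\theta=s$, so that $\eta(t;\theta)=1$ for every $t\in[s,T]$. With that choice the definitions in \eqref{eq:firstapprox} give the pointwise identity
\[
u^{\epsilon}-U_1^{\epsilon} \;=\; \bigl(u^{\epsilon}-u_1^{\epsilon}\bigr) + \bigl(u_1^{\epsilon}-U_1^{\epsilon}\bigr) \;=\; R(\cdot,\cdot\,;\theta) \;-\; \epsilon\,\zeta^{\epsilon}(x)\,\chi_j(x/\epsilon)\,\frac{\partial u_0}{\partial x_j}(x,t)
\]
on $D\times[s,T]$. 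Theorem \ref{lem:R2} handles the $R$ piece in both target norms with the advertised rate, so the whole task reduces to proving the same estimate for the boundary-layer remainder $E(x,t):=\epsilon\,\zeta^{\epsilon}\,\chi_j(x/\epsilon)\,\partial u_0/\partial x_j$.

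For the $L^p([s,T];H^1(D))$ bound on $E$, I would expand $\nabla E$ by the product rule into the three summands involving $\nabla\zeta^{\epsilon}$, $(\nabla\chi_j)(x/\epsilon)$, and $\nabla\partial u_0/\partial x_j$, and estimate each one exactly as the term $T_{1,2}$ is treated in the proof of Lemma \ref{lemma:meanValue}: the cell-corrector bounds \eqref{ineq:chi}--\eqref{eq:chi_Hm}, the layer estimate \eqref{eq:qzetaestimate} on $\zeta^{\epsilon}$, the regularity \eqref{eq:apriori1} on $u_0$, and the same H\"older pairing with Sobolev embedding in the dimension-dependent exponents $q,r$. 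This produces the pointwise estimate $\|E(\cdot,t)\|_{H^1(D)}\lesssim \epsilon^{\min\{1/2,2/d-1/2\}}\,t^{-\alpha}\,\|a\|_{L^2(D)}$, and raising to the $p$-th power and integrating in $t$ yields the desired bound because $\int_s^T t^{-\alpha p}\,dt<\infty$ for $p<1/\alpha$.

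For the fractional derivative, since $\zeta^{\epsilon}$ and $\chi_j(\cdot/\epsilon)$ are time-independent and $u_0$ has sufficient regularity that $\Dbe$ commutes with $\partial/\partial x_j$, I would write
\[
\Dbe E \;=\; \epsilon\,\zeta^{\epsilon}\,\chi_j(x/\epsilon)\,\frac{\partial}{\partial x_j}\Dbe u_0,
\]
and bound $\Dbe E$ in $H^{-1}(D)$ by duality. For a test function $\phi\in H^1_0(D)$ one integrates by parts in $x_j$ to move the derivative off $\Dbe u_0$ onto $\zeta^{\epsilon}\chi_j(x/\epsilon)\phi$, producing three integrals that pair $\Dbe u_0\in L^2(D)$ (whose norm is $\lesssim t^{-\alpha}\|a\|_{L^2(D)}$ by \eqref{eq:apriori1}) against $\epsilon\nabla\zeta^{\epsilon}\chi_j(x/\epsilon)\phi$, $\zeta^{\epsilon}(\nabla_y\chi_j)(x/\epsilon)\phi$, and $\epsilon\zeta^{\epsilon}\chi_j(x/\epsilon)\nabla\phi$ respectively. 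Each is controlled by the same H\"older--Sobolev pairing as for $T_{1,2}$, yielding the bound $\epsilon^{\min\{1/2,2/d-1/2\}}\,t^{-\alpha}\,\|a\|_{L^2(D)}\,\|\phi\|_{H^1(D)}$. Taking the supremum over $\phi$ and integrating in time closes the argument via the triangle inequality with Theorem \ref{lem:R2}.

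The main technical obstacle is the $H^{-1}$ bound on $\Dbe E$: a crude estimate through $\|\Dbe E\|_{L^2(D)}$ loses the concentration gain from $\zeta^{\epsilon}$ and delivers only $\mathcal{O}(\epsilon\,t^{-\alpha})$, which is insufficient in $d=3$; the duality step above is what keeps the exponent in step with the spatial estimate. The commutation of $\Dbe$ with $\partial/\partial x_j$, used implicitly in both the spatial and temporal estimates, is legitimate because $u_0\in C((0,T];H^2(D)\cap H^1_0(D))$ by the Sakamoto--Yamamoto regularity quoted in \eqref{eq:apriori1}.
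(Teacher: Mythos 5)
Your proposal is correct and is essentially the paper's own argument, fully spelled out: the paper's proof consists of one sentence pointing to Theorem \ref{lem:R2} and the dominance of the estimate \eqref{termt12}, and what it leaves implicit is exactly your decomposition $u^{\epsilon}-U_1^{\epsilon}=R-\epsilon\zeta^{\epsilon}\chi_j(x/\epsilon)\,\partial u_0/\partial x_j$ on $[s,T]$ together with the reuse of the $T_{1,2}$-type H\"older--Sobolev pairing for the spatial part and the $T_2$-type manipulation for the fractional derivative of the boundary-layer remainder. Your version is more careful than the paper's, particularly the duality/integration-by-parts treatment of $\Dbe E$ in $H^{-1}(D)$, which is genuinely needed and is only asserted in the paper's handling of $T_2$.

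One remark in your closing paragraph is off, though it does not affect the proof: you say the crude route through $\norm{\Dbe E}_{L^2(D)}$ ``delivers only $\mathcal{O}(\epsilon\,t^{-\alpha})$, which is insufficient in $d=3$.'' A bound of order $\epsilon\,t^{-\alpha}$ would in fact be \emph{stronger} than the claimed $\epsilon^{2/d-1/2}\,t^{-\alpha}=\epsilon^{1/6}\,t^{-\alpha}$ for $d=3$. The real obstruction to the crude route is that it requires $\partial_j\Dbe u_0\in L^2(D)$, i.e.\ $\Dbe u_0\in H^1(D)$, which the available regularity \eqref{eq:apriori1} does not supply; the duality step is the right move for that reason, not because of the rate.
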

\begin{proof}
The proof follows from the argument in Theorem \ref{lem:R2} and the fact that the estimate \eqref{termt12} dominates the convergence rate. 
\end{proof}
\begin{remark}[Comparision with homogenization results to parabolic equations with periodic coefficients]
Due to the boundary layer effect in the bounded domain, we can obtain from \cite{pastukhova2010estimates,Zhikov2006} that the optimal convergence rate is $\mathcal{O}(\epsilon^{1/2})$. Corollary \ref{coro:1order} shows that the convergence is $\mathcal{O}(\epsilon^{\min\{1/2,2/d-1/2\}})$. One main restriction to apply similar technique employed in \cite{Zhikov2006} to our current problem of a time-fractional diffusion problem \eqref{eq:fine-model} is the fact that there is no product rule for fractional derivative, but there is for the first derivate. For the same reason, one can not derive the pointwise estimate over the time domain for time-fractional diffusion problems. 
\end{remark}
\section{Numerical experiments}\label{sec:num}

In this section, we conduct a series of numerical experiments to demonstrate the performance of the first order corrector introduced in Section \ref{first order approximation}. Furthermore, we will validate the convergence result presented in Corollary \ref{coro:1order} corresponding to different permeability fields and fractional power $\alpha$. 

Consider the time-fractional diffusion equation (\ref{eq:fine-model}) in the unit square $D=[0,1]^2$ with total time $T=1$ and $\alpha:=0.9$. We will use scalar coefficient $\kappa^\epsilon(x_1,x_2)$ in the following numerical tests. The smooth initial data tested in Sections \ref{subsec:smooth} and \ref{subsec:nonsmooth} is 
\[
a(x_1,x_2):=x_1(1-x_1)x_2(1-x_2). 
\]
We refer to Figure \ref{fig:initial_data} for an illustration. 
\begin{figure}[H]
  \centering
  \includegraphics[trim={2cm 0.5cm 2cm 0.5cm},clip,width=0.45 \textwidth]{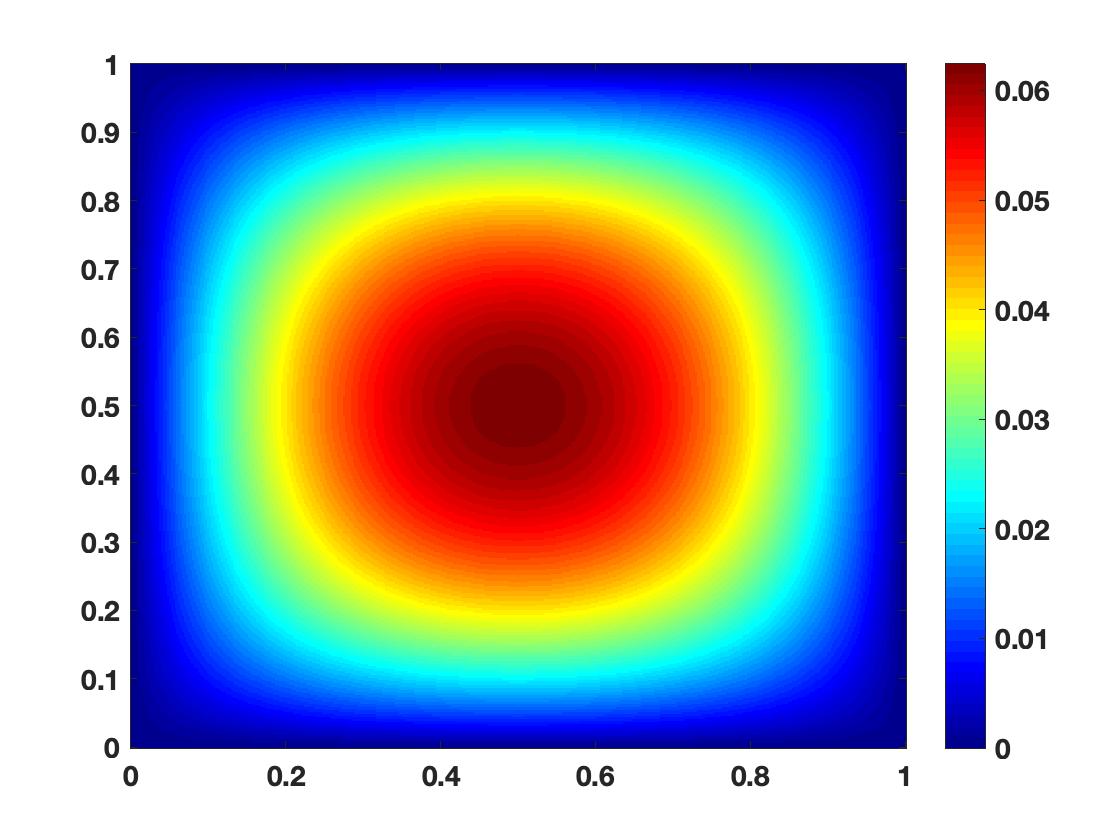}
  \caption{initial data: $a(x_1,x_2)$}
  \label{fig:initial_data}
\end{figure}

In Sections \ref{subsec:smooth} and \ref{subsec:nonsmooth}, we are concerned with the convergence rate of the first order approximation $U_1^\epsilon$ for diffusion coefficients $\kappa^\epsilon(x_1,x_2)$ of different regularities. To this end, we test two kinds of permeability fields, namely, the smooth and nonsmooth permeability fields in these two sections, respectively. Since Corollary \ref{coro:1order} is also valid for rough initial data $a(x)\in L^2(D)$, we present the convergence history of the first order approximation $U_1^\epsilon$ with a rough initial data in Section \ref{subsec:initial_L2}. 
\subsection{Numerical tests with smooth permeability fields}\label{subsec:smooth}
To define the smooth permeability field, we take
\begin{align}\label{eq:smoothPerm}
\kappa(y_1,y_2):=10+\sin\Big(2\pi\{ y_1\}\{ y_2\}\big (1-\{ y_1\}\big)\big(1-\{ y_2\}\big)\Big)
\end{align}
as the periodic smooth function defined over the unit square $Y$. 
Here, $\{\cdot\}$ means taking the fractional part. 
\begin{figure}[H]
  \centering
  \includegraphics[trim={2cm 0.5cm 2cm 0.5cm},clip,width=0.45 \textwidth]{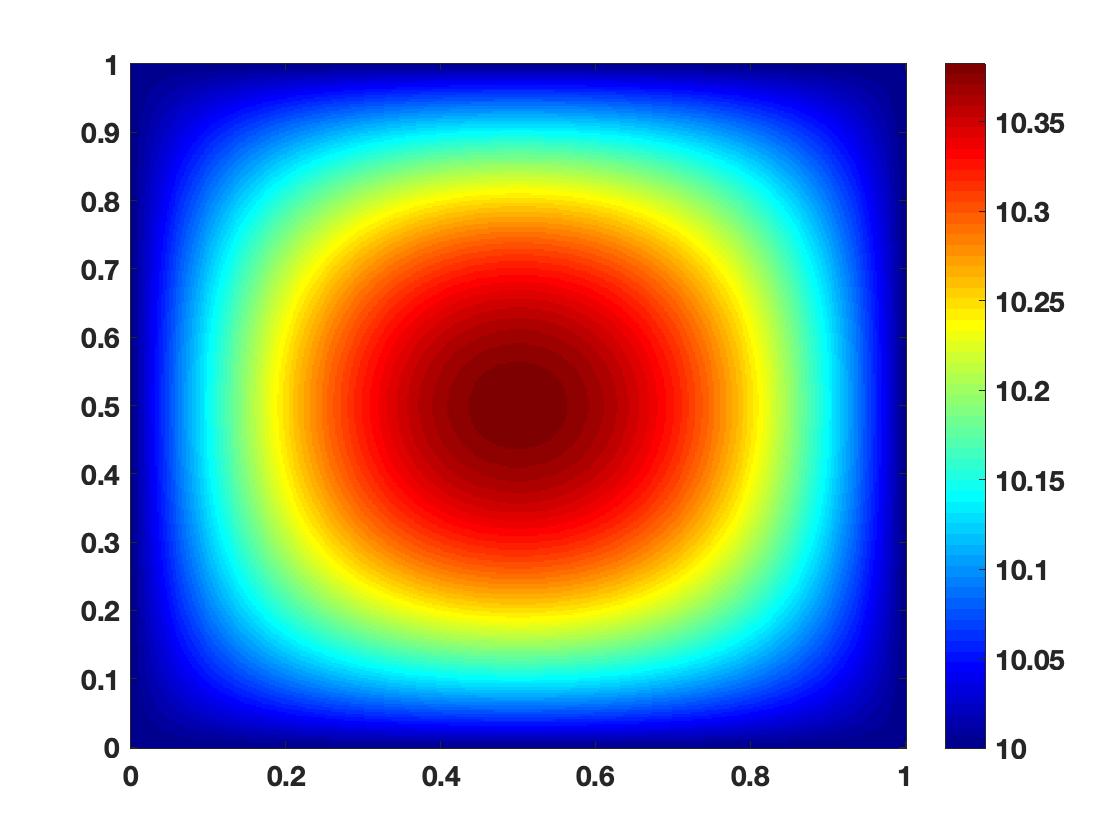}
  \includegraphics[trim={2cm 0.5cm 2cm 0.5cm},clip,width=0.45 \textwidth]{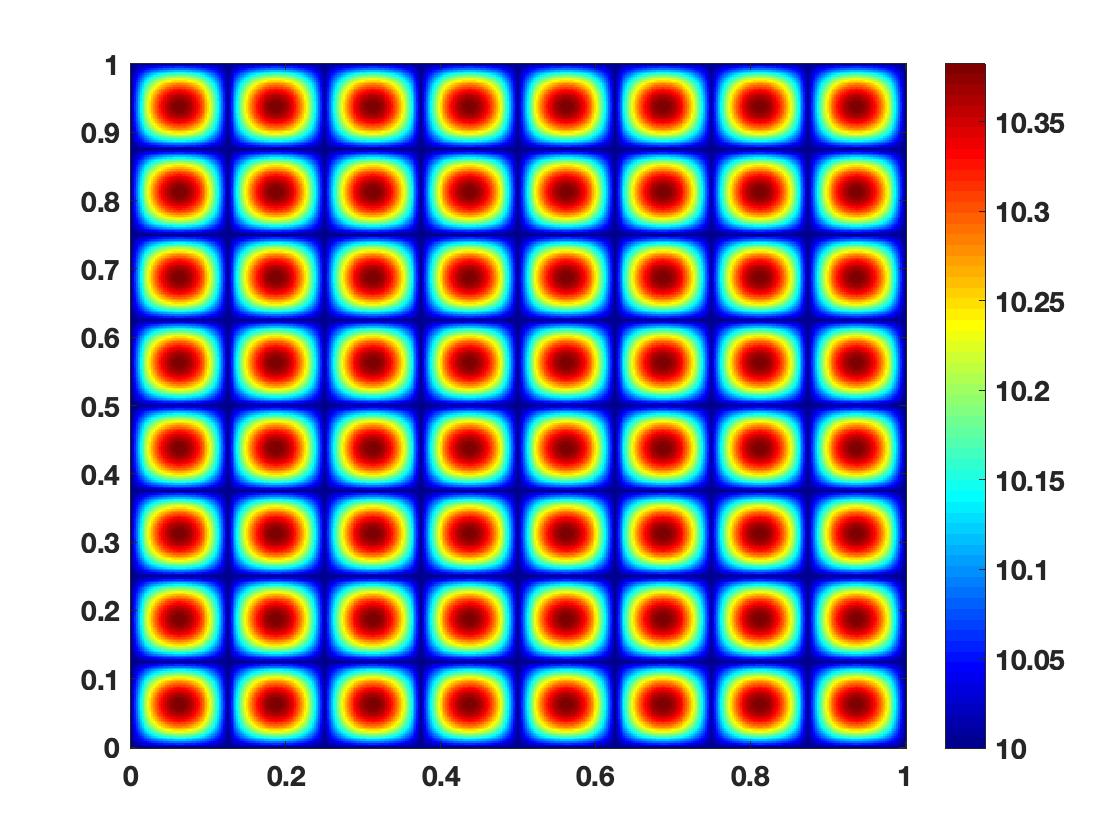}
 \caption{A smooth periodic permeability field with $\epsilon=\frac{1}{8}$ in a cell and over the domain $D$: $\kappa(y_1,y_2)$ and $\kappa^\epsilon(x_1,x_2)$.}
  \label{fig:SmoothPermeability}
\end{figure}

Recall that $\kappa^\epsilon(x_1,x_2)$ is a periodic function with period $\epsilon$. Its one cell after stretching over one unit cell is $\kappa(y_1,y_2)$, i.e., $\kappa^\epsilon(x_1,x_2):=\kappa(x_1/\epsilon,x_2/\epsilon)$. See Figure \ref{fig:SmoothPermeability} for an illustration.
Note that the contrast for this permeability is $\frac{11}{9}$.
The main aim of this section is to investigate the convergence rate of the first order corrector $U_1^\epsilon$. We will present the absolute and relative errors between $U_1^\epsilon$ and $u^\epsilon$ in $L^2$-norm and $H^1$-norm.

Let $\mathcal{T}_h$ be a decomposition of the domain $D$ into non-overlapping shape-regular rectangular elements with maximal mesh size $h:=2^{-9}$. Let $V_h$ be the conforming piecewise affine finite element space associated with the partition $\mathcal{T}_h$:
\[
V_h:=\{v\in C^{0}(D): v|_{T}\in \mathcal{Q}_{1}(T) \text{ for all } T\in \mathcal{T}_h\}\cap H^1(D),
\]
where $\mathcal{Q}_1(T)$ denotes the space of affine polynomials on each element $T\in \mathcal{T}_h$.
Let $V_h^{0}:=V_h\cap H^1_{0}(D)$.
We discretize the time interval $[0,1]$ with a time step $\Delta t:=1/100$. Let $t_n=(n-1)\Delta t$ for $n=1,2,\cdots,N$ with $N:=\Delta t^{-1}+1$.


We adopt one popular scheme \cite{chuanjuxu2007} to discretize the time variable $t$ in (\ref{eq:fine-model}) and (\ref{eq:u0}), and apply the conforming Galerkin method to approximate the exact solution $u^{\epsilon}$. We denote its approximation at $t=t_{k+1}$ by $u^{\epsilon,k+1}_h$ for $k=0,1,\cdots, N-1$. 

To this end, we seek for $u^{\epsilon,k+1}_h\in V_h^{0}$ for $k=0,1,\cdots, N-1$, satisfying
\begin{equation*}
\begin{aligned}
\forall \, v_h\in V_h^0:\int_{D}u^{\epsilon,k+1}_h v_h \,\mathrm{dx} &+\Gamma(2-\alpha)\Delta t^{\alpha}\int_{D}\kappa(x/\epsilon)\nabla u^{\epsilon,k+1}_h \cdot\nabla v_h \,\mathrm{dx}\\
&=\sum_{j=0} ^{k-1} (b_j-b_{j+1}) \int_{D}u^{\epsilon,k-j}_h v_h \,\mathrm{dx}
+b_k \int_{D}a(x) v_h \,\mathrm{dx} .
\end{aligned}
\end{equation*}

Here, the parameters $b_j$ are given by 
\[
b_j:=(j+1)^{1-\alpha}-j^{1-\alpha}\text{ for all } j=0,1,\cdots,N.
\]

The numerical solutions $u^{\epsilon,k}_h$ for $k=11,51 \text{ and } 101$ are depicted in Figure \ref{fig:u_eps}.
\begin{figure}[H]
  \centering
  \includegraphics[trim={2cm 0.5cm 2cm 0.5cm},clip,width=0.32 \textwidth]{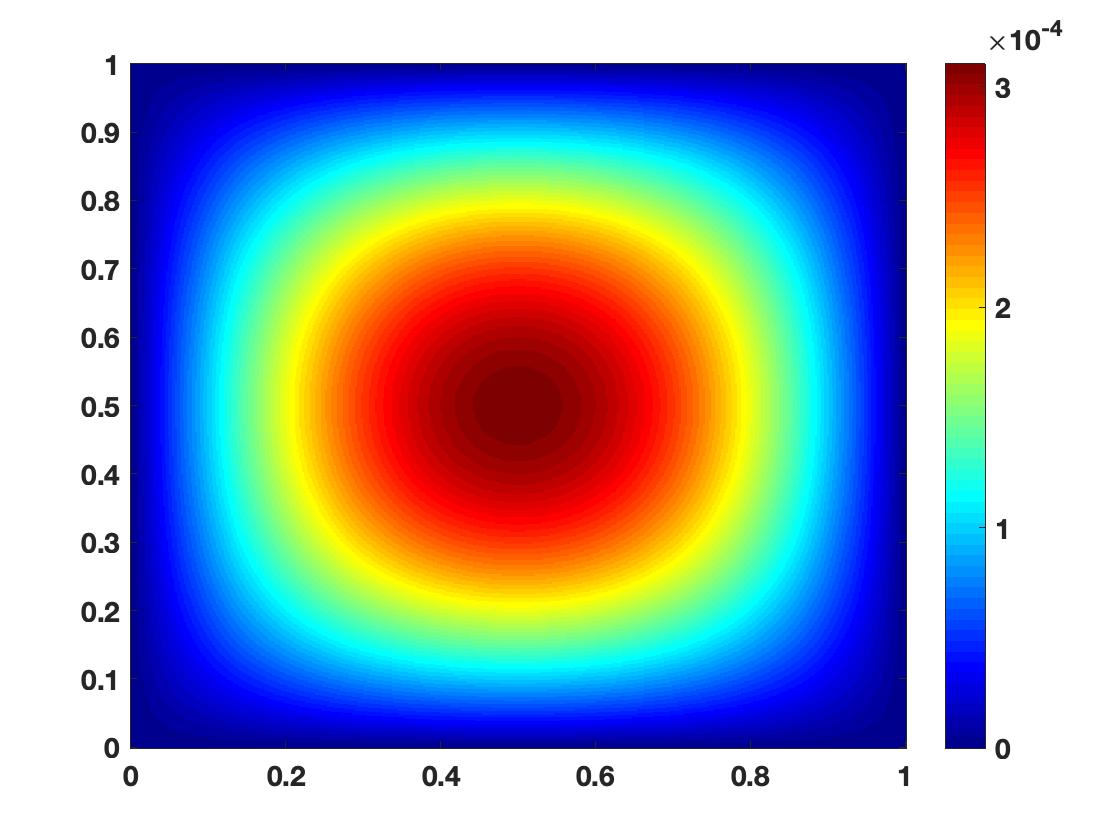}
  \includegraphics[trim={2cm 0.5cm 2cm 0.5cm},clip,width=0.32 \textwidth]{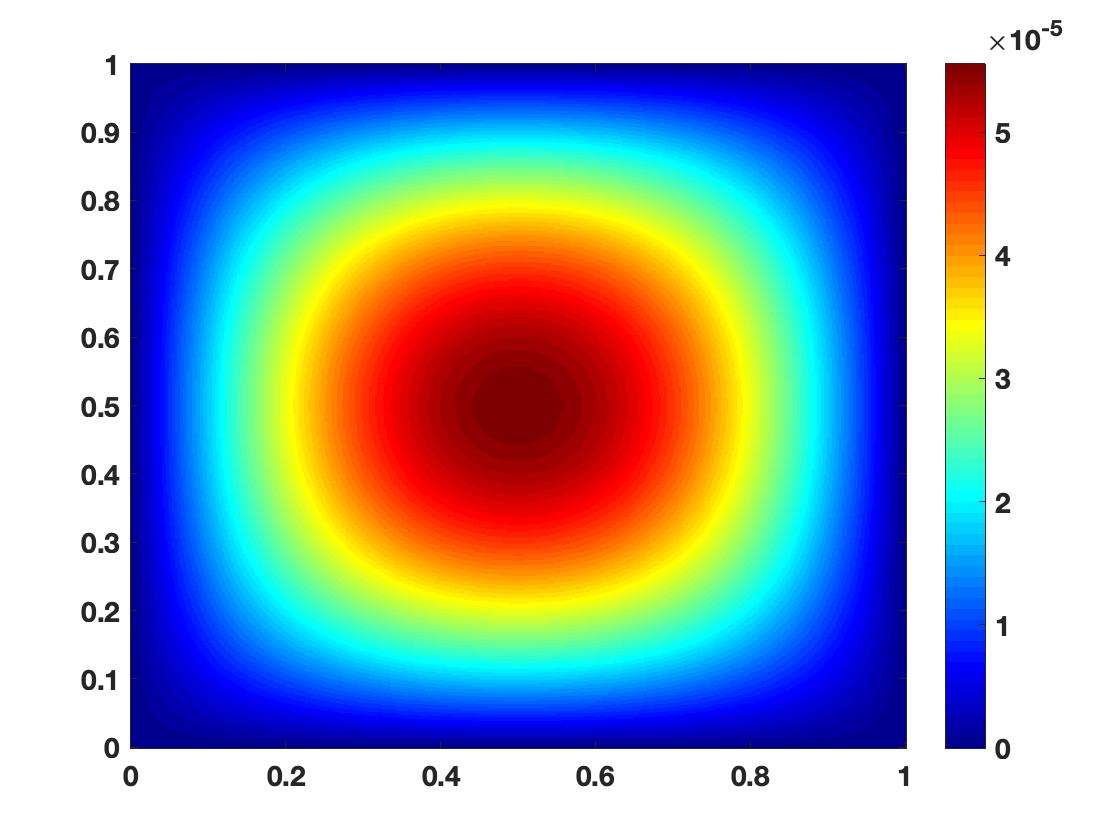}
  \includegraphics[trim={2cm 0.5cm 2cm 0.5cm},clip,width=0.32 \textwidth]{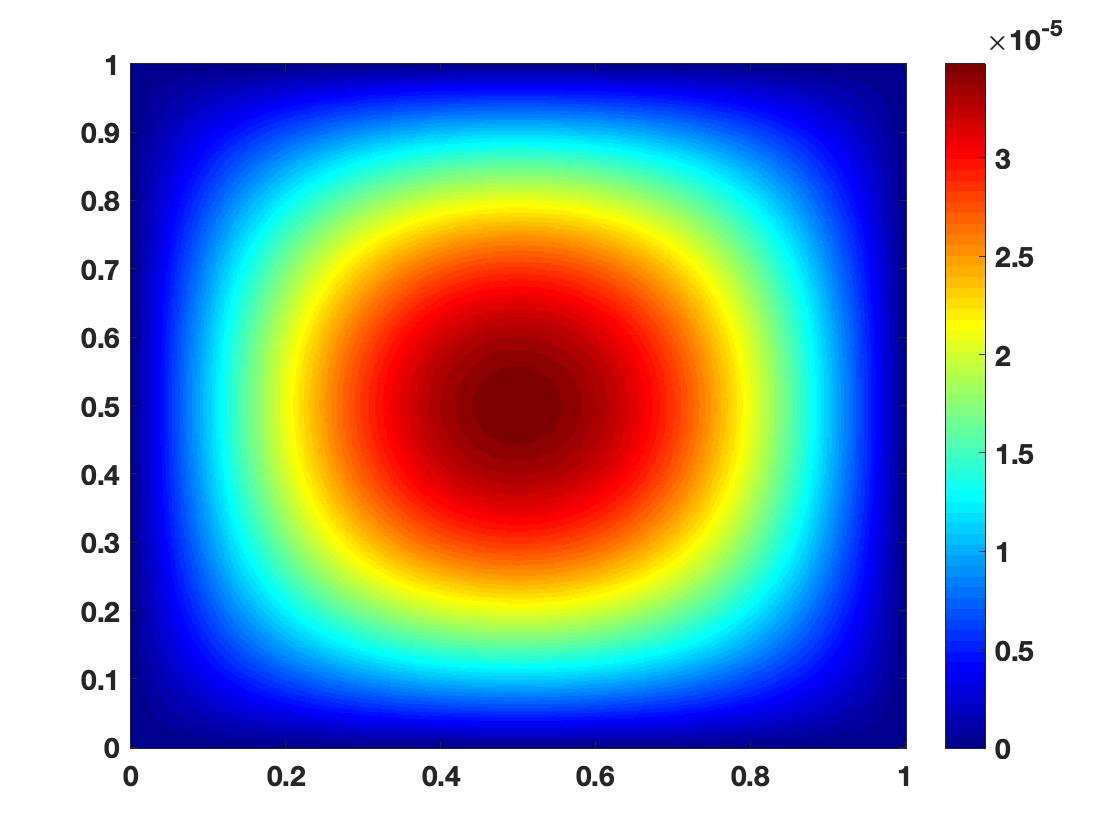}
  \caption{The fine scale solution $u^{\epsilon,k}_h$ for $k=11,51 \text{ and } 101$ to Problem \eqref{eq:fine-model} with $\kappa^{\eps}$ in \eqref{eq:smoothPerm} and $\epsilon:=\frac{1}{8}$. }
  \label{fig:u_eps}
\end{figure}

We denote $U^{\epsilon,k}_{1,h}$ (or $u_{0,h}^{k}$) for $k=1,2\cdots,N$ as the numerical approximation to $U^{\epsilon}_{1}(t)$ (or $u_{0}(t)$) for $t=0,\Delta t, \cdots,1$. In order to obtain the first order approximation $U^{\epsilon,k}_{1,h}$, we first solve the cell problem (\ref{eq:cellPrb}). To this end, we first divide the computational domain $Y$ into non-overlapping shape-regular rectangular elements with a maximal mesh size $h:=2^{-6}$. Then we solve the cell problem \eqref{eq:cellPrb} with continuous piece-wise bilinear Lagrange Finite Element Method using the conforming Galerkin formulation. We plot the two cell solutions $\chi_1$ and $\chi_2$ in Figure \ref{fig:chi_smooth} with the smooth permeability field in Figure \ref{fig:SmoothPermeability}.
\begin{figure}[htb!]
  \centering
  \includegraphics[width=0.45 \textwidth]{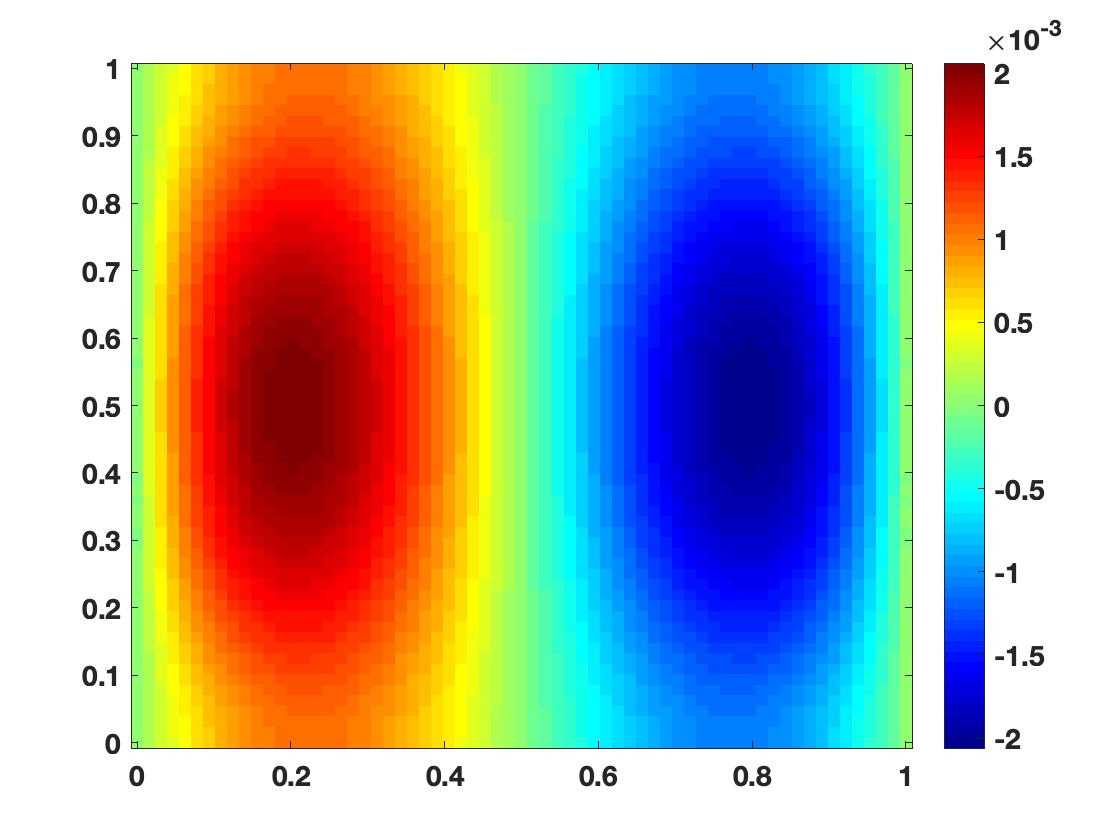}
  \includegraphics[width=0.45 \textwidth]{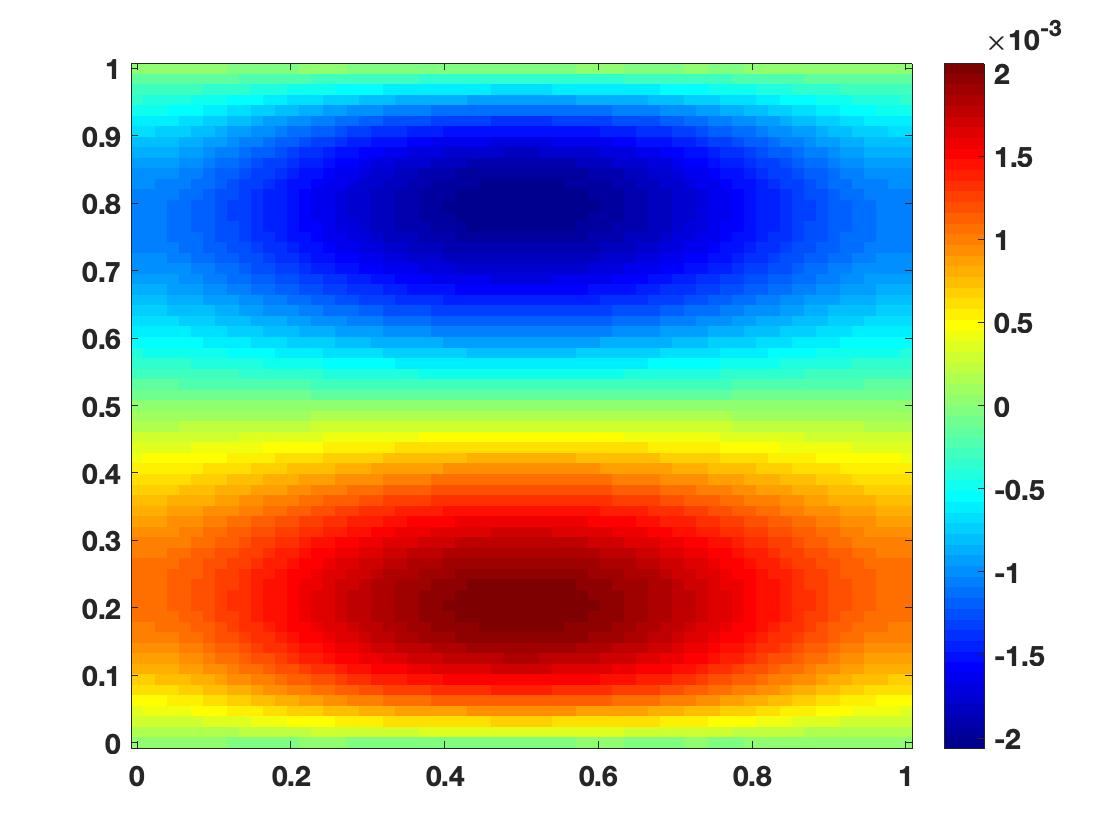}
  \caption{solutions to cell problem: $\chi_1(y_1,y_2)$  and $\chi_2(y_1,y_2)$ }
  \label{fig:chi_smooth}
\end{figure}

Utilizing the solutions to the cell problem \eqref{eq:cellPrb}, i.e., $\chi_1$ and $\chi_2$, we can obtain the effective coefficient $\kappa^*$ from \eqref{eqn:coefnext}, and then solve for $u_0$ by \eqref{eq:u0}.
Note that there is no microscale oscillation in the effective coefficient $\kappa^*$, thus the solution $u_0$ can be solved in a much coarser mesh compared to the mesh $\mathcal{T}_h$ associated to the original problem \eqref{eq:fine-model}. To simplify our notations, we adopt the same mesh $\mathcal{T}_h$ and finite element space $V_h^0$ as before, and utilize the conforming Galerkin formulation to solve for $u_{0,h}^{k+1}$ for $k=0,1,\cdots, N-1$.

We present the fine scale approximate solutions $u^{k}_{0,h}$ for $k=11,51 \text{ and } 101$ in Figure \ref{fig:u0}.
\begin{figure}[H]
  \centering
  \includegraphics[trim={2cm 0.5cm 2cm 0.5cm},clip,width=0.32 \textwidth]{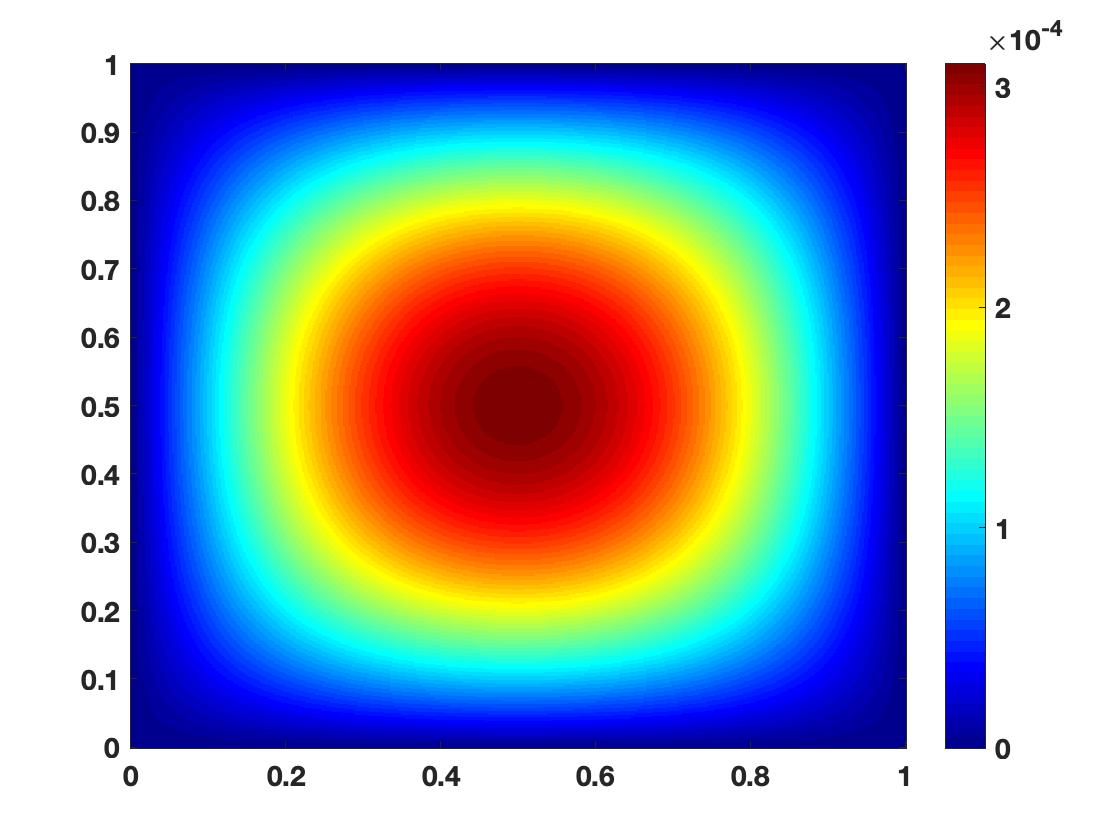}
  \includegraphics[trim={2cm 0.5cm 2cm 0.5cm},clip,width=0.32 \textwidth]{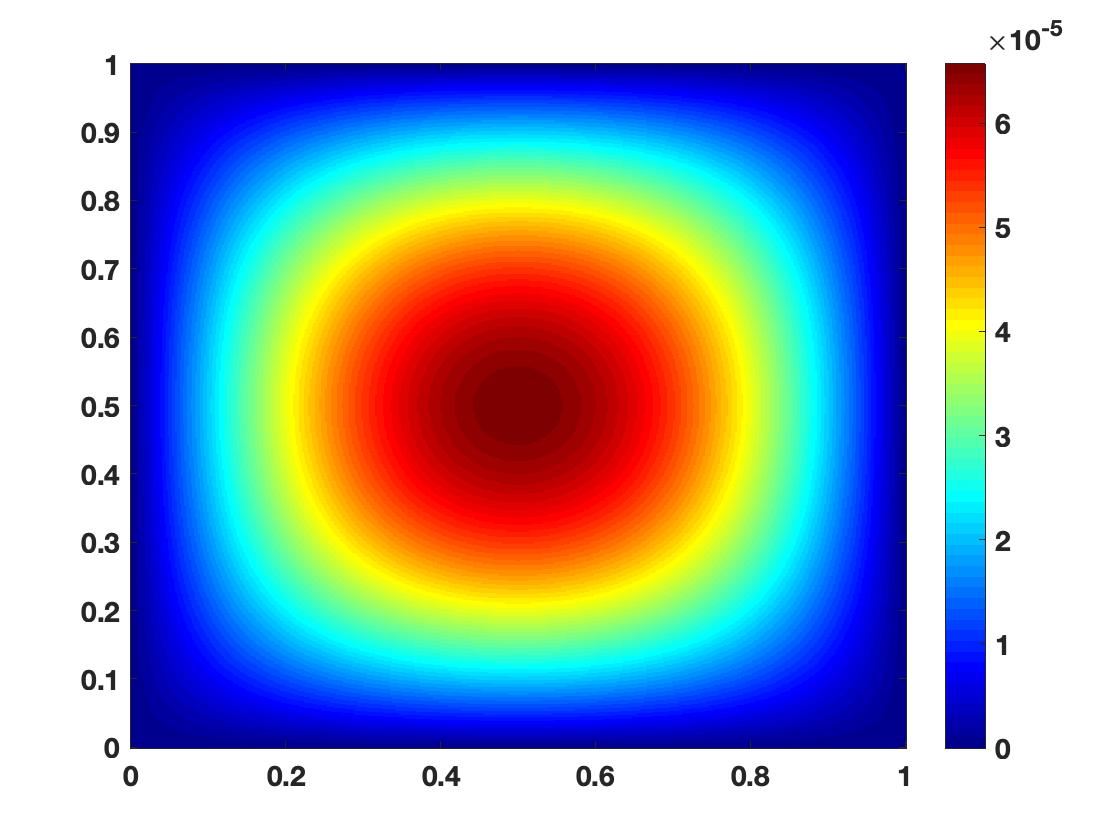}
  \includegraphics[trim={2cm 0.5cm 2cm 0.5cm},clip,width=0.32 \textwidth]{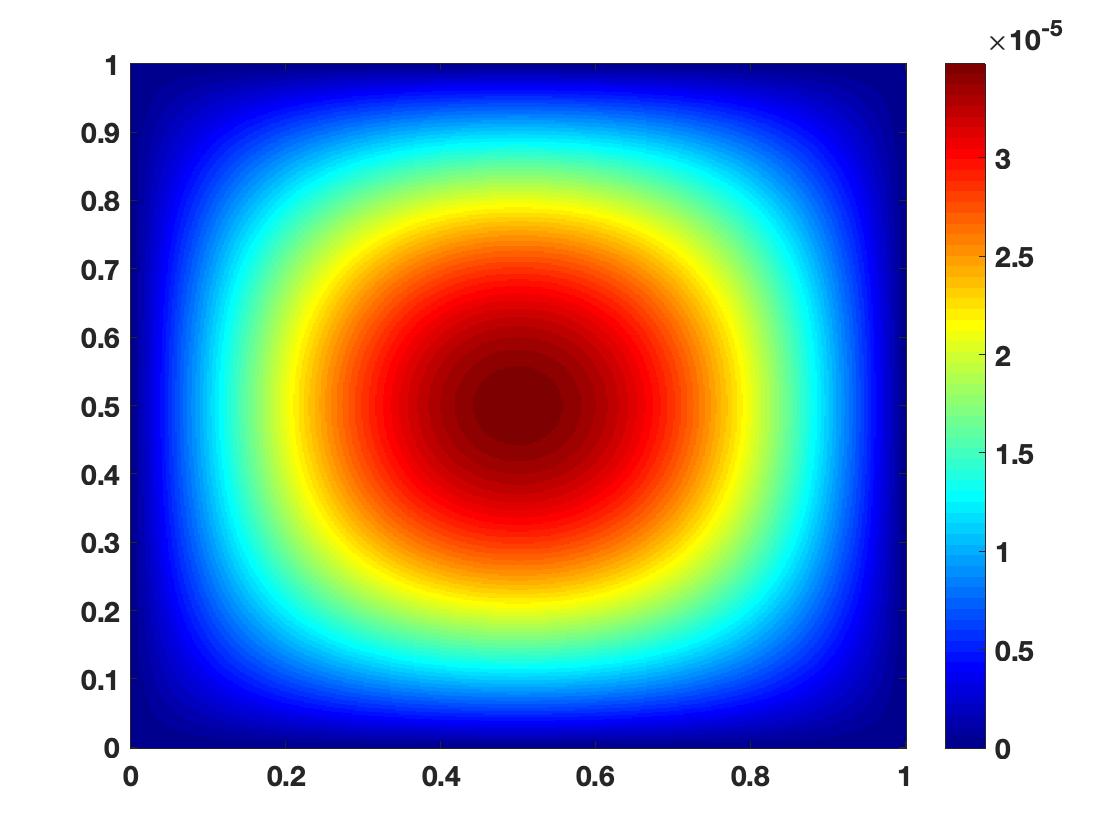}
  \caption{ The homogenized solution $u^{0,k}_h$ for $k=11,51 \text{ and } 101$ to Problem \eqref{eq:fine-model} with $\kappa$ in \eqref{eq:smoothPerm} and $\epsilon:=\frac{1}{8}$.}
  \label{fig:u0}
\end{figure}

Finally, the first order approximation $U^\epsilon_1$ can be estimated using formula (\ref{eq:firstapprox}). We present the graphs of the approximate solutions $U^{\epsilon,k}_{1,h}$ for $k=11,51 \text{ and } 101$ in Figure \ref{fig:u1}.

\begin{figure}[H]
  \centering
  \includegraphics[trim={2cm 0.5cm 2cm 0.5cm},clip,width=0.32 \textwidth]{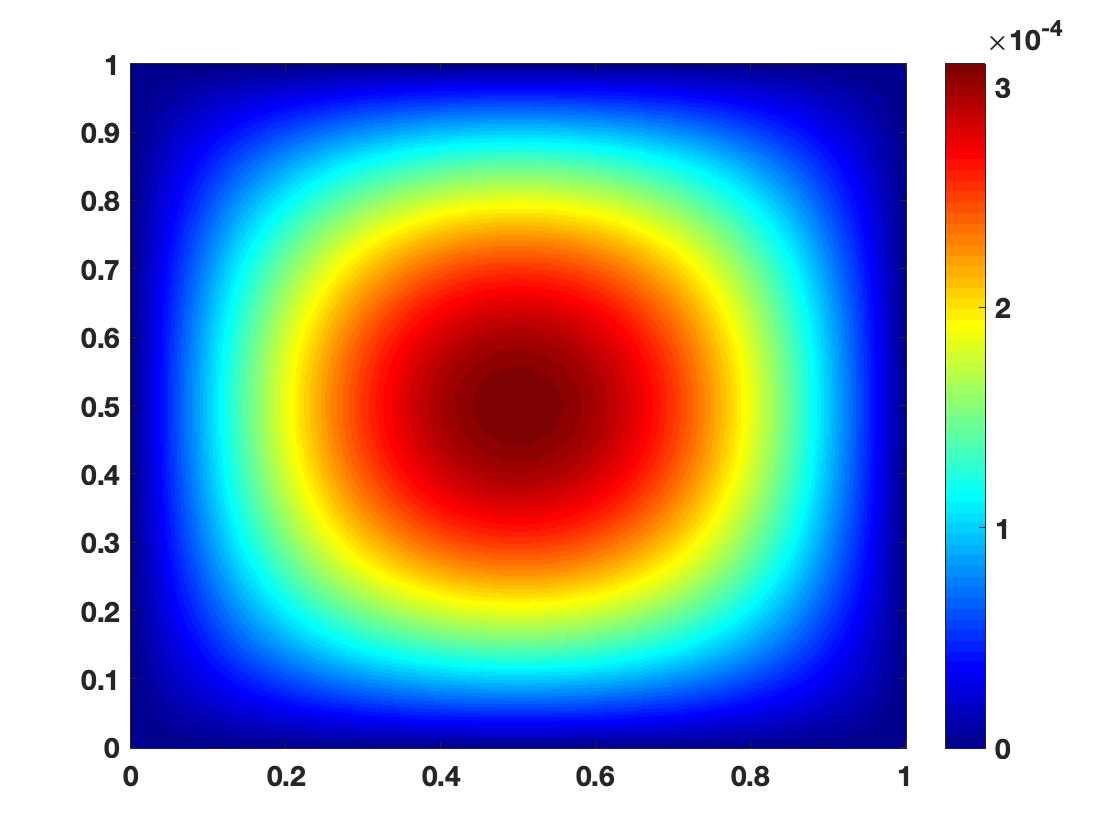}
  \includegraphics[trim={2cm 0.5cm 2cm 0.5cm},clip,width=0.32 \textwidth]{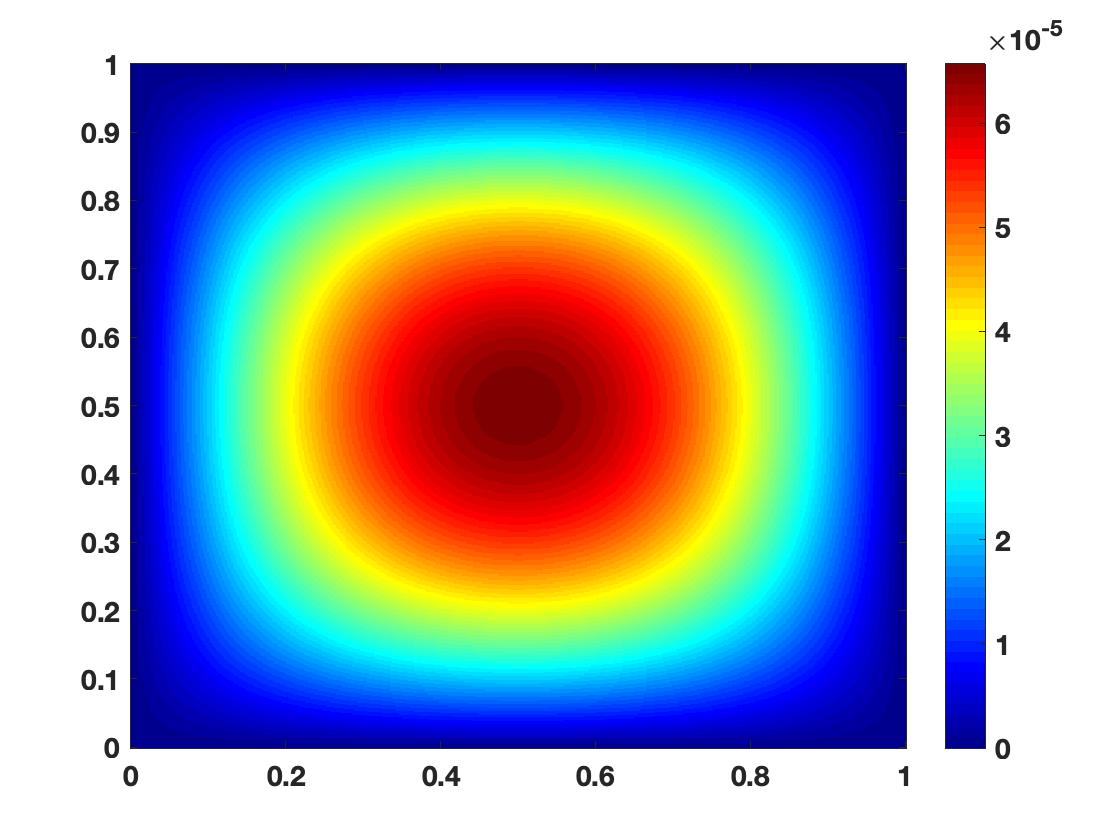}
  \includegraphics[trim={2cm 0.5cm 2cm 0.5cm},clip,width=0.32 \textwidth]{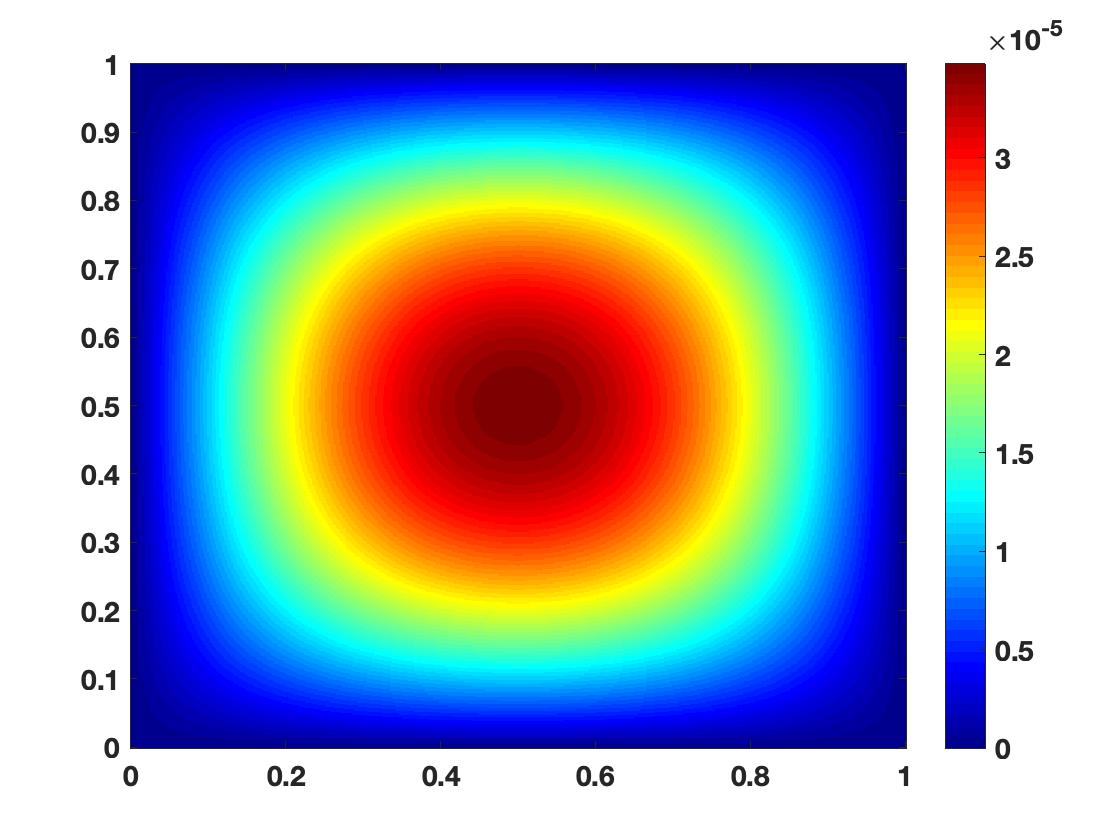}
  \caption{The first order approximation solution $U^{\epsilon,k}_{1,h}$ for $k=11,51 \text{ and } 101$ to Problem \eqref{eq:fine-model} with $\kappa$ in \eqref{eq:smoothPerm} and $\epsilon:=\frac{1}{8}$.}
  \label{fig:u1}
\end{figure}

We present the absolute error and relative error in $L^2$ norm and $H^1$ norm in Table \ref{error:SmoothPermeability}. The first column displays the discrete time steps at which we calculate the error.  The next two columns display the absolute error and relative error under the $L^2$ norm between the fine-scale solution $u_h^\epsilon$ and the first order approximation $U^\epsilon_{1,h}$. The last two columns display the absolute error and relative error under the $H^1$ norm between the fine-scale solution $u_h^\epsilon$ and the first order approximation $U^\epsilon_{1,h}$.

\begin{table}[H]
\begin{center}
\begin{tabular}{|c|c|c|c|c|}
        \hline
$t$ & $ \norm{u_{h}^\epsilon-U^\epsilon _{1,h}}_{L^2(D)}$& $\frac{ \norm{u_{h}^\epsilon-U^\epsilon _{1,h}}_{L^2(D)}}{ \norm{u_{h}^\epsilon} _{L^2(D) }}$& $\norm{u_{h}^\epsilon-U^\epsilon _{1,h}}_{H^1(D)}$& $\frac{ \norm{u_{h}^\epsilon-U^\epsilon _{1,h}}_{H^1(D)}}{ \norm{u^\epsilon_{h}} _{H^1(D) }}$
 \\ \hline
      0.1 & 2.1235e-9  &   1.3488e-5 &  1.4463e-7  &  2.0673e-4   \\
      0.5 & 4.5471e-10   & 1.3637e-5  &  3.0813e-8  &  2.0794e-4  \\
      1 & 2.411e-10   &  1.3656e-5  & 1.6325e-8 &  2.0809e-4    \\ \hline
\end{tabular}
\end{center}
\caption{The convergence history of the first order approximation to 
Problem \eqref{eq:fine-model} with $\kappa$ in \eqref{eq:smoothPerm} and $\epsilon:=\frac{1}{8}$.}
 \label{error:SmoothPermeability}
\end{table}

Furthermore, one numerical experiment is conducted with larger variation in the coefficient $\kappa$ compared to \eqref{eq:smoothPerm} to see the influence of the variation on the accuracy of homogenization. In this experiment, we take the same initial data $a(x_1,x_2)$ and the period $\epsilon:=\frac{1}{8}$. We set
\begin{align}\label{eq:smoothPermh}
\kappa(y_1,y_2):=10+9\sin\Big(2\pi\{{ y_1} \}\{{ y_2}\}\big (1-\{ y_1\}\big)\big(1-\{ y_2\}\big)\Big).
\end{align}
Note that the variation in this coefficient $\kappa^{\eps}$ is much larger than that defined in \eqref{eq:smoothPerm}. The convergence history of the first order approximation with $\kappa(y_1,y_2)$ in \eqref{eq:smoothPermh} is presented in Table \ref{table:error:SmoothPermeability_BiggerContrast}. Compared with the results in Table \ref{error:SmoothPermeability}, a larger variation in the diffusion coefficient $\kappa(y_1,y_2)$ results in a larger error in the first order approximation. 
\begin{table}[H]
\begin{center}
\begin{tabular}{|c|c|c|c|c|}
        \hline
$t$ & $ \norm{u_{h}^\epsilon-U^\epsilon _{1,h}}_{L^2(D)}$& $\frac{ \norm{u_{h}^\epsilon-U^\epsilon _{1,h}}_{L^2(D)}}{ \norm{u_{h}^\epsilon} _{L^2(D) }}$& $\norm{u_{h}^\epsilon-U^\epsilon _{1,h}}_{H^1(D)}$& $\frac{ \norm{u_{h}^\epsilon-U^\epsilon _{1,h}}_{H^1(D)}}{ \norm{u^\epsilon_{h}} _{H^1(D) }}$
 \\ \hline
      0.1 & 1.6061e-8  &   1.16596e-4 &  1.0832e-6 &  1.7659e-3   \\
      0.5 & 3.4581e-9  &  1.1753e-4  &  2.3239e-7 &  1.7737e-3   \\
      1 & 1.8342e-9   &  1.1765e-4  & 1.2321e-7 &  1.7747 e-3   \\ \hline
\end{tabular}
\end{center}
\caption{The convergence history of the first order approximation to Problem \eqref{eq:fine-model} with $\kappa$ in \eqref{eq:smoothPermh} and $\epsilon:=\frac{1}{8}$.}
        \label{table:error:SmoothPermeability_BiggerContrast}
\end{table}
In order to verify the convergence rate of the first approximation $U_1^{\epsilon}$, we test two different values of the parameter $\epsilon$ being $\frac{1}{16}$ and $\frac{1}{32}$, respectively. Their corresponding convergence histories are shown in Tables \ref{table:error:SmoothPermeability_SmallerEps} and \ref{table:error:SmoothPermeability_BiggerEps}. 

\begin{table}[H]
\begin{center}
\begin{tabular}{|c|c|c|c|c|}
        \hline
$t$ & $ \norm{u_{h}^\epsilon-U^\epsilon _{1,h}}_{L^2(D)}$& $\frac{ \norm{u_{h}^\epsilon-U^\epsilon _{1,h}}_{L^2(D)}}{ \norm{u_{h}^\epsilon} _{L^2(D) }}$& $\norm{u_{h}^\epsilon-U^\epsilon _{1,h}}_{H^1(D)}$& $\frac{ \norm{u_{h}^\epsilon-U^\epsilon _{1,h}}_{H^1(D)}}{ \norm{u^\epsilon_{h}} _{H^1(D) }}$
 \\ \hline
      0.1 & 5.2208e-10  &   3.3162e-6 &  7.1850e-8  &  1.0270e-4   \\
      0.5 & 1.1117e-10   &  3.3341e-6  &  1.5262e-8  &  1.0300e-4   \\
      1 & 5.8895e-11   &  3.3363e-6  & 8.0828e-9 &  1.0303e-4    \\ \hline
\end{tabular}
\end{center}
\caption{The convergence history of the first order approximation to 
Problem \eqref{eq:fine-model} with $\kappa$ in \eqref{eq:smoothPerm} and $\epsilon:=\frac{1}{16}$.}
        \label{table:error:SmoothPermeability_SmallerEps}
\end{table}

\begin{table}[H]
\begin{center}
\begin{tabular}{|c|c|c|c|c|}
        \hline
$t$ & $ \norm{u_{h}^\epsilon-U^\epsilon _{1,h}}_{L^2(D)}$& $\frac{ \norm{u_{h}^\epsilon-U^\epsilon _{1,h}}_{L^2(D)}}{ \norm{u_{h}^\epsilon} _{L^2(D) }}$& $\norm{u_{h}^\epsilon-U^\epsilon _{1,h}}_{H^1(D)}$& $\frac{ \norm{u_{h}^\epsilon-U^\epsilon _{1,h}}_{H^1(D)}}{ \norm{u^\epsilon_{h}} _{H^1(D) }}$
 \\ \hline
      0.1 & 1.4377e-10  &   9.1322e-7 &  3.8067e-8  &  5.4412e-5   \\
      0.5 & 3.0547e-11   &  9.1614e-7  &  8.0778e-9  &  5.4515e-5  \\
      1 & 1.6179e-11   &  9.1649e-7& 4.2776e-9 &  5.4527e-5    \\ \hline
\end{tabular}
\end{center}
\caption{The convergence history of the first order approximation to 
Problem \eqref{eq:fine-model} with $\kappa$ in \eqref{eq:smoothPerm} and $\epsilon:=\frac{1}{32}$.}
        \label{table:error:SmoothPermeability_BiggerEps}
\end{table}

One can calculate directly from Tables \ref{error:SmoothPermeability}, \ref{table:error:SmoothPermeability_SmallerEps} and \ref{table:error:SmoothPermeability_BiggerEps} that the first order approximation maintains a convergence rate of $\mathcal{O}(\epsilon^{0.9623})$, $\mathcal{O}(\epsilon^{0.9657})$ and $\mathcal{O}(\epsilon^{0.9661})$ for $t=0.1$, $t=0.5$ and $t=1$, respectively. 

We also test different values of the parameter $\alpha\in (0,1)$ and they all exhibit a similar convergence rate, as proved in Corollary \ref{coro:1order}. For the brevity of presentation, we will not present these results. 

\subsection{Numerical tests with non-smooth permeability fields}\label{subsec:nonsmooth}
Even though the theoretical result presented in Corollary \ref{coro:1order} is proved under the assumption that the diffusion coefficient $\kappa(y_1,y_2)$ is sufficiently smooth, we investigate in this section how well the first order approximation $U_1^{\epsilon}(x,t)$ performs when the coefficient $\kappa(y_1,y_2)$ is rough and can admit large variation. 

Firstly, we define two rough permeability fields $\kappa(x_1,x_2)$ with different variations. 
Let $U:=[\frac{1}{5},\frac{4}{5}]^2$ be a rectangle. Recall that $Y=[0,1]^2$ is a unit square. We set the variation in the first permeability field to be 1.1, which is defined by  
\begin{equation}\label{eq:kappa3}
\kappa_1(x_1,x_2):=
\left\{
\begin{aligned}
&11, &&\text{ if } (x_1,x_2)\in U\\
&10, &&\text{ if } (x_1,x_2)\in Y\setminus U. 
\end{aligned}
\right. 
\end{equation}
See Figure \ref{fig:NonSmoothPermeability_LessContrast} for an illustration.
\begin{figure}[H]
  \centering
  \includegraphics[trim={2cm 0.5cm 2cm 0.5cm},clip,width=0.4 \textwidth]{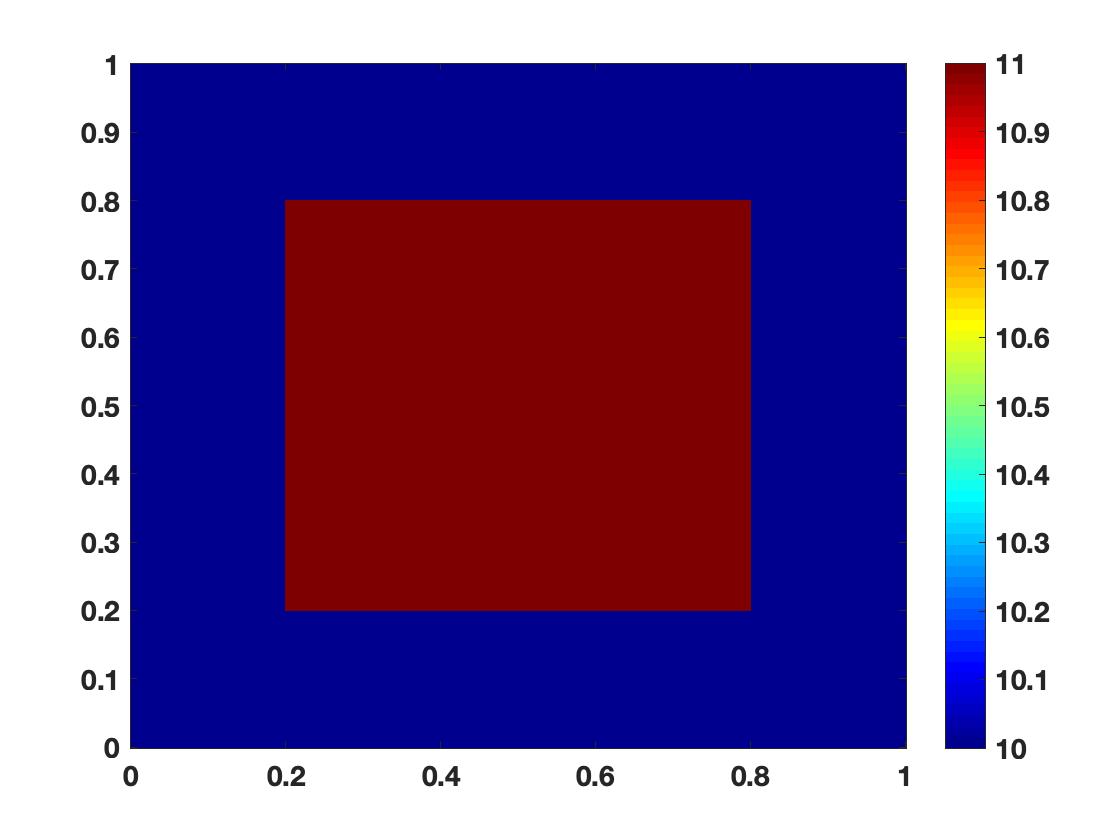}
  \includegraphics[trim={2cm 0.5cm 2cm 0.5cm},clip,width=0.4 \textwidth]{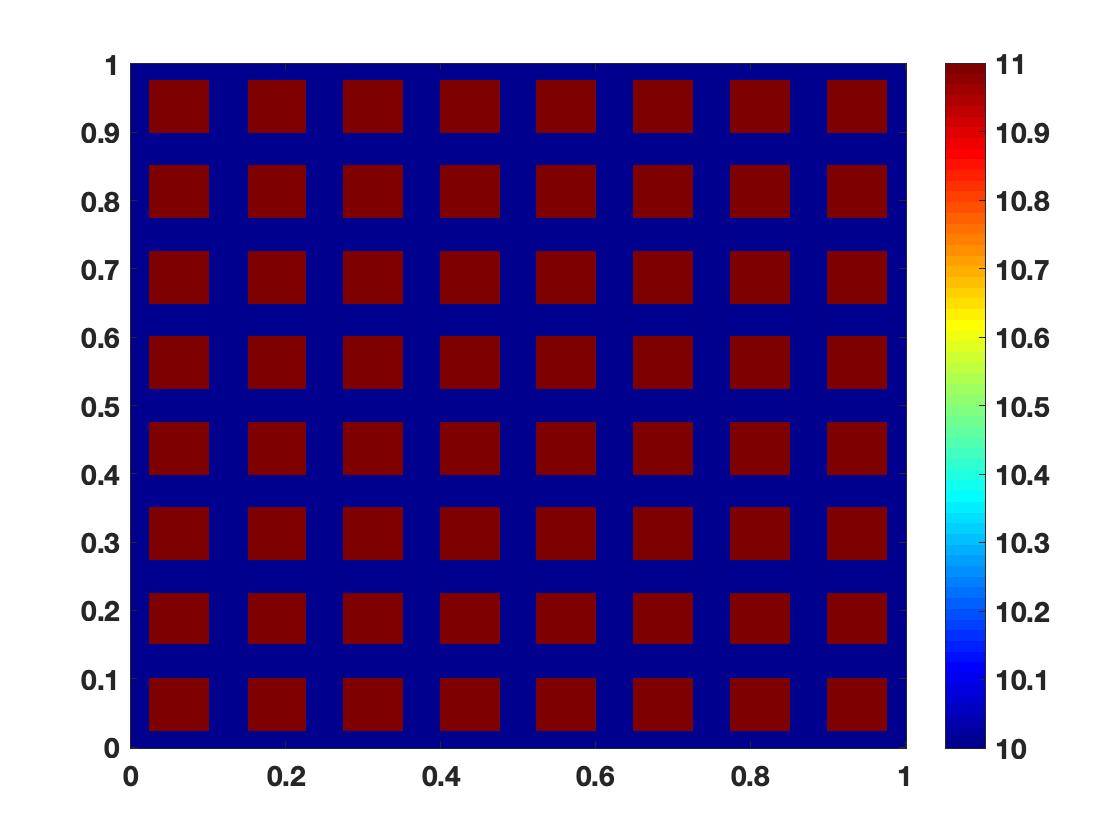}
  \caption{A nonsmooth permeability field of smaller variation: $\kappa_1(x_1,x_2)$ and $\kappa_1^\epsilon(x_1,x_2)$ with $\epsilon:=\frac{1}{8}$.}
  \label{fig:NonSmoothPermeability_LessContrast}
\end{figure}
We set the variation of the second rough permeability field to 2 and define
\begin{equation}\label{eq:kappa4}
\kappa_2(x_1,x_2):=
\left\{
\begin{aligned}
&20, &&\text{ if } (x_1,x_2)\in U\\
&10, &&\text{ if } (x_1,x_2)\in Y\setminus U. 
\end{aligned}
\right. 
\end{equation}
We present the graphs of such $\kappa(y_1,y_2)$ and $\kappa^\epsilon(x_1,x_2)$ in Figure \ref{fig:NonSmoothPermeability_BiggerContrast}.
\begin{figure}[H]
  \centering
  \includegraphics[trim={2cm 0.5cm 2cm 0.5cm},clip,width=0.4 \textwidth]{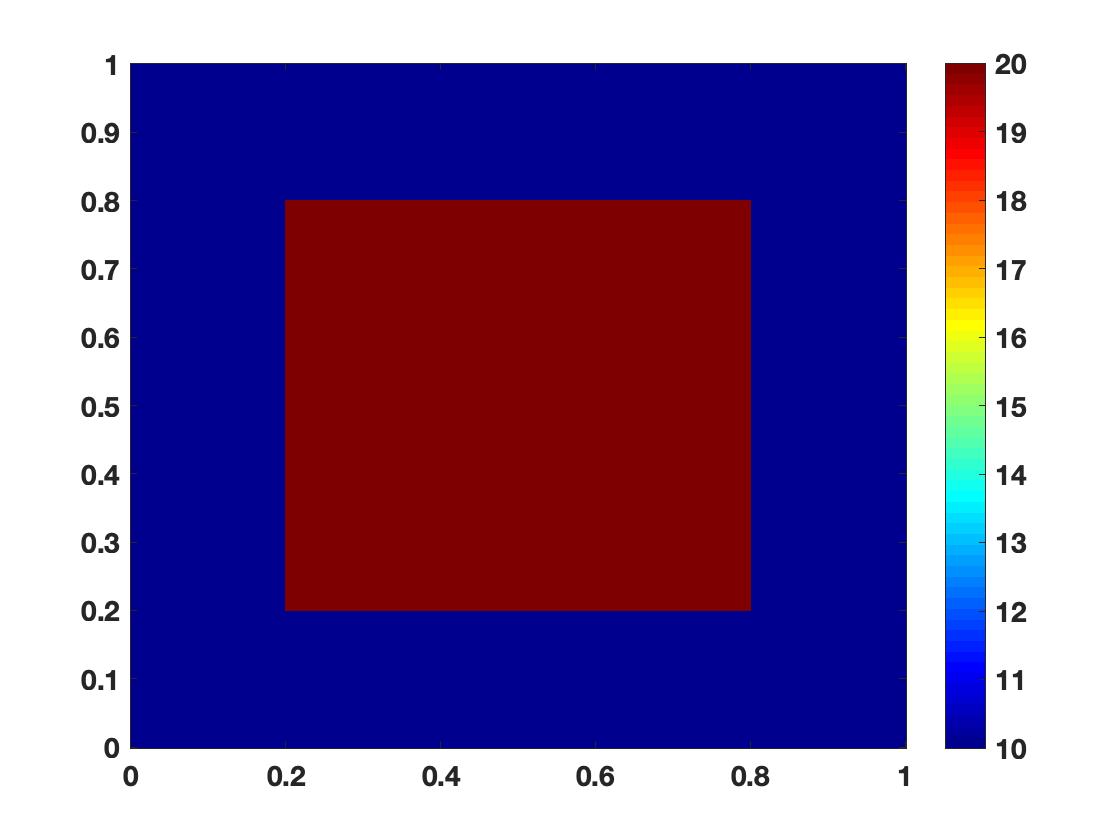}
  \includegraphics[trim={2cm 0.5cm 2cm 0.5cm},clip,width=0.4 \textwidth]{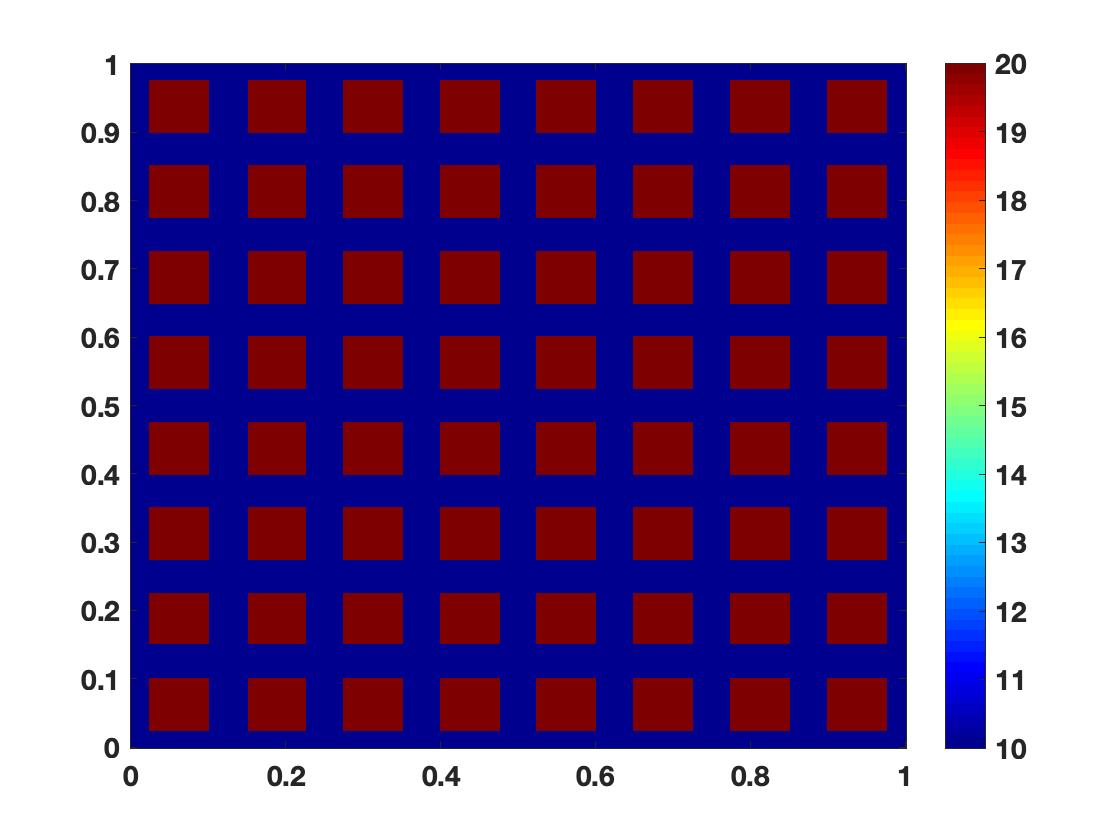}
  \caption{A nonsmooth permeability field of larger variation: $\kappa_2(x_1,x_2)$  and $\kappa_2^\epsilon(x_1,x_2)$ with $\epsilon=\frac{1}{8}$. }
  \label{fig:NonSmoothPermeability_BiggerContrast}
\end{figure}

Let the parameter $\epsilon:=1/8$. We present the convergence histories of the first order approximation $U_1^{\epsilon}(x,t)$ with those two nonsmooth permeability fields in \eqref{eq:kappa3} and \eqref{eq:kappa4} in Tables \ref{table:error:NonsmoothPermeability_LessContrast} and \ref{table:error:NonsmoothPermeability_BiggerContrast}. One can observe that the former outperforms the latter. 

\begin{table}[H]
\begin{center}
\begin{tabular}{|c|c|c|c|c|}
        \hline
$t$ & $ \norm{u_{h}^\epsilon-U^\epsilon _{1,h}}_{L^2(D)}$& $\frac{ \norm{u_{h}^\epsilon-U^\epsilon _{1,h}}_{L^2(D)}}{ \norm{u_{h}^\epsilon} _{L^2(D) }}$& $\norm{u_{h}^\epsilon-U^\epsilon _{1,h}}_{H^1(D)}$& $\frac{ \norm{u_{h}^\epsilon-U^\epsilon _{1,h}}_{H^1(D)}}{ \norm{u^\epsilon_{h}} _{H^1(D) }}$
 \\ \hline
      0.1 & 8.3877e-10  &   2.5129e-2 &  5.6728e-9  &  3.8002e-2   \\
      0.5 & 1.8973e-10   &  2.5130e-2 &  1.2832e-9  &  3.8003e-2   \\
      1 & 1.0121e-10   &  2.5130e-2 & 6.8450e-10  &  3.8003e-2   \\ \hline
\end{tabular}
\end{center}
\caption{The convergence history of the first order approximation to 
Problem \eqref{eq:fine-model} with $\kappa:=\kappa_1$ in \eqref{eq:kappa3} and $\epsilon:=\frac{1}{8}$.}
        \label{table:error:NonsmoothPermeability_LessContrast}
\end{table}

\begin{table}[H]
\begin{center}
\begin{tabular}{|c|c|c|c|c|}
        \hline
$t$ & $ \norm{u_{h}^\epsilon-U^\epsilon _{1,h}}_{L^2(D)}$& $\frac{ \norm{u_{h}^\epsilon-U^\epsilon _{1,h}}_{L^2(D)}}{ \norm{u_{h}^\epsilon} _{L^2(D) }}$& $\norm{u_{h}^\epsilon-U^\epsilon _{1,h}}_{H^1(D)}$& $\frac{ \norm{u_{h}^\epsilon-U^\epsilon _{1,h}}_{H^1(D)}}{ \norm{u^\epsilon_{h}} _{H^1(D) }}$
 \\ \hline
      0.1 & 5.5466e-9  &   1.6618e-1&  3.6977e-8  &  2.4771e-1 \\
      0.5 & 1.2546e-9   &  1.6618e-1  &  8.3640e-9  & 2.4772e-1   \\
      1 & 6.6929e-10 &  1.6618e-1 & 4.4618e-9  &  2.4772e-1   \\ \hline
\end{tabular}
\end{center}
\caption{The convergence history of the first order approximation to 
Problem \eqref{eq:fine-model} with $\kappa:=\kappa_2$ in \eqref{eq:kappa4} and $\epsilon:=\frac{1}{8}$.}
        \label{table:error:NonsmoothPermeability_BiggerContrast}
\end{table}

\subsection{Numerical tests with rough initial data}\label{subsec:initial_L2}
Now we study the convergence rate of the first order approximation $U^{\epsilon}_1(x,t)$ when the initial data $a(x)$ is nonsmooth. To this aim, we take a rough initial data $a(x_1,x_2)$ defined by  
\begin{equation}\label{eq:rough-a}
a(x_1,x_2):=
\left\{
\begin{aligned}
&1, &&\text{ if } (x_1,x_2)\in (0.5,1)^2\\
&0, &&\text{ if } (x_1,x_2)\in (0,1)^2\setminus (0.5,1)^2, 
\end{aligned}
\right. 
\end{equation}
which is depicted in Figure \ref{fig:initial_data_L2}. 
\begin{figure}[H]
  \centering
  \includegraphics[trim={2cm 0.5cm 2cm 0.5cm},clip,width=0.4 \textwidth]{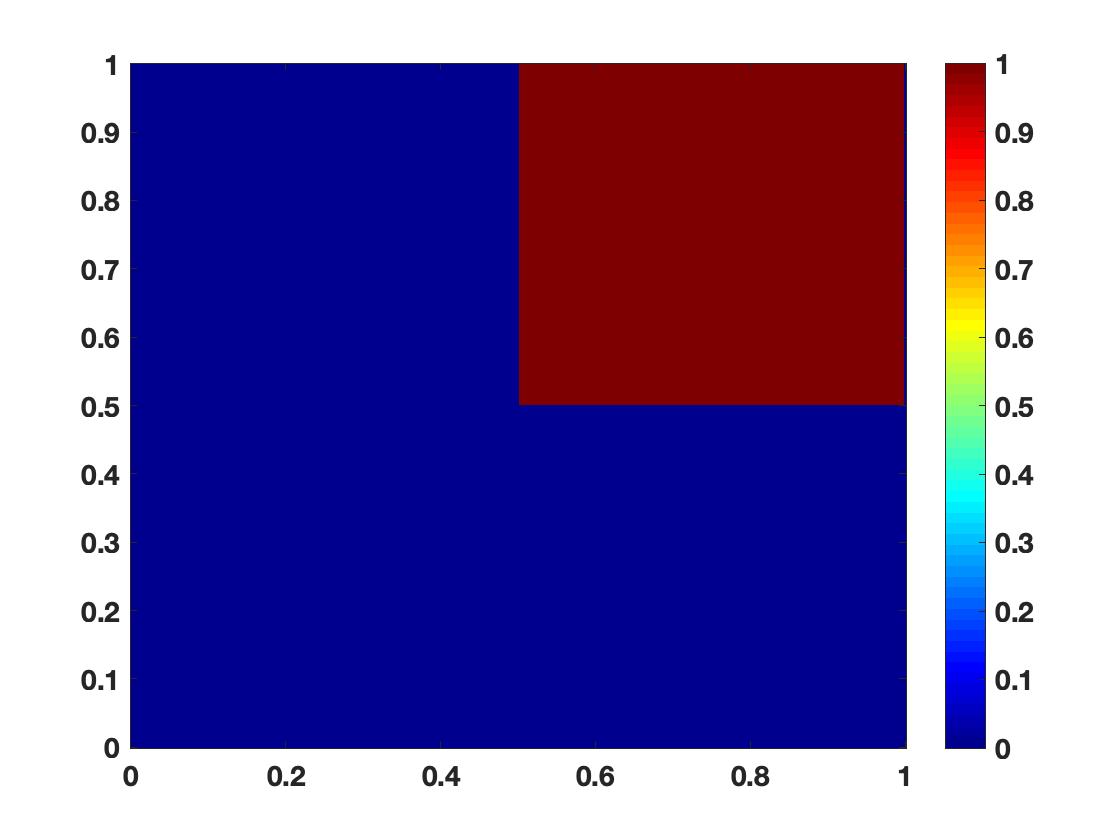}
  \caption{The initial data $a(x_1,x_2)$ as defined in \eqref{eq:rough-a}.}
  \label{fig:initial_data_L2}
\end{figure}
To emphasize the effect of a rough initial data on the convergence rate of the first order approximation, we take the smooth permeability field $\kappa(x_1,x_2)$ as defined in \eqref{eq:smoothPerm}. We adopt the same numerical scheme to calculate the numerical solutions as in Section \ref{subsec:smooth}. 
Like before, we test $\epsilon=\frac{1}{8}, \frac{1}{16}$ and $\frac{1}{32}$ to validate the convergence rate proved in Corollary \ref{coro:1order}. Their corresponding convergence histories of the first order approximation to Problem \eqref{eq:fine-model} are presented in Tables
\ref{table:error:a2InL2(D) smoothPermeability_BiggerContrast_8}, \ref{table:error:a2InL2(D) smoothPermeability_BiggerContrast_16} and \ref{table:error:a2InL2(D) smoothPermeability_BiggerContrast_32}.

\begin{table}[H]
\begin{center}
\begin{tabular}{|c|c|c|c|c|}
        \hline
$t$ & $ \norm{u_{h}^\epsilon-U^\epsilon _{1,h}}_{L^2(D)}$& $\frac{ \norm{u_{h}^\epsilon-U^\epsilon _{1,h}}_{L^2(D)}}{ \norm{u_{h}^\epsilon} _{L^2(D) }}$& $\norm{u_{h}^\epsilon-U^\epsilon _{1,h}}_{H^1(D)}$& $\frac{ \norm{u_{h}^\epsilon-U^\epsilon _{1,h}}_{H^1(D)}}{ \norm{u^\epsilon_{h}} _{H^1(D) }}$
 \\ \hline
      0.1 & 2.0111e-7  &   1.8092e-4 &  5.1832e-6  &  8.3829e-4  \\
      0.5 & 4.4356e-8  &  1.8618e-4  &  1.1545e-6  & 8.6128e-4  \\
      1 & 2.3592e-8 &  1.8678e-4 &6.1475e-7  &  8.6391e-4   \\ \hline
\end{tabular}
\end{center}
\caption{The convergence history of the first order approximation to 
Problem \eqref{eq:fine-model} with $a(x)$ defined in \eqref{eq:rough-a}, $\kappa$ in \eqref{eq:smoothPerm} and $\epsilon:=\frac{1}{8}$.}
        \label{table:error:a2InL2(D) smoothPermeability_BiggerContrast_8}
\end{table}
Comparing the results presented in Table \ref{table:error:a2InL2(D) smoothPermeability_BiggerContrast_8} with that in Table \ref{error:SmoothPermeability}, one can observe that the latter admits a first order approximation with a much smaller error as expected, since a rough initial data produces singularity in the solution when the time $t$ is small. 
\begin{table}[H]
\begin{center}
\begin{tabular}{|c|c|c|c|c|}
        \hline
$t$ & $ \norm{u_{h}^\epsilon-U^\epsilon _{1,h}}_{L^2(D)}$& $\frac{ \norm{u_{h}^\epsilon-U^\epsilon _{1,h}}_{L^2(D)}}{ \norm{u_{h}^\epsilon} _{L^2(D) }}$& $\norm{u_{h}^\epsilon-U^\epsilon _{1,h}}_{H^1(D)}$& $\frac{ \norm{u_{h}^\epsilon-U^\epsilon _{1,h}}_{H^1(D)}}{ \norm{u^\epsilon_{h}} _{H^1(D) }}$
 \\ \hline
      0.1 & 5.3061e-8  &   4.7739e-5 &  2.7039e-6  &  4.3744e-4  \\
      0.5 & 1.1742e-8  &  4.9292e-5  &  6.0373e-7  & 4.5053e-4   \\
      1 & 6.24783e-9 &  4.9471e-5 &3.2156e-7  &  4.5204e-4   \\ \hline
\end{tabular}
\end{center}
\caption{The convergence history of the first order approximation to 
Problem \eqref{eq:fine-model} with $a(x)$ defined in \eqref{eq:rough-a}, $\kappa$ in \eqref{eq:smoothPerm} and $\epsilon:=\frac{1}{16}$.}
        \label{table:error:a2InL2(D) smoothPermeability_BiggerContrast_16}
\end{table}

\begin{table}[H]
\begin{center}
\begin{tabular}{|c|c|c|c|c|}
        \hline
$t$ & $ \norm{u_{h}^\epsilon-U^\epsilon _{1,h}}_{L^2(D)}$& $\frac{ \norm{u_{h}^\epsilon-U^\epsilon _{1,h}}_{L^2(D)}}{ \norm{u_{h}^\epsilon} _{L^2(D) }}$& $\norm{u_{h}^\epsilon-U^\epsilon _{1,h}}_{H^1(D)}$& $\frac{ \norm{u_{h}^\epsilon-U^\epsilon _{1,h}}_{H^1(D)}}{ \norm{u^\epsilon_{h}} _{H^1(D) }}$
 \\ \hline
      0.1 & 1.3154e-8  &   1.1835e-5 &  1.3839e-6  &  2.2390e-4  \\
      0.5 & 2.9144e-9   &  1.2235e-5  &  3.0918e-7  & 2.3074e-4   \\
      1 & 1.5510e-9 &  1.2281e-5 & 1.6469e-7  &  2.3153e-4   \\ \hline
\end{tabular}
\end{center}
\caption{The convergence history of the first order approximation to 
Problem \eqref{eq:fine-model} with $a(x)$ defined in \eqref{eq:rough-a}, $\kappa$ in \eqref{eq:smoothPerm} and $\epsilon:=\frac{1}{32}$.}
        \label{table:error:a2InL2(D) smoothPermeability_BiggerContrast_32}
\end{table}
One can calculate from Tables \ref{table:error:a2InL2(D) smoothPermeability_BiggerContrast_8}, \ref{table:error:a2InL2(D) smoothPermeability_BiggerContrast_16} and \ref{table:error:a2InL2(D) smoothPermeability_BiggerContrast_32} that the first order approximation has a convergence rate of $\mathcal{O}(\epsilon^{0.9523})$, $\mathcal{O}(\epsilon^{0.9502})$ and $\mathcal{O}(\epsilon^{0.9499})$ for $t=0.1$, $t=0.5$ and $t=1$, respectively. 

\section{Conclusion}\label{sec:conclusion}
This paper is concerned with constructing a homogenization theorem for the time-fractional diffusion equations with a homogeneous Dirichlet boundary condition and an inhomogeneous initial data $a(x)\in L^{2}(D)$ in a bounded convex polyhedral domain $D$. Under the assumption that the diffusion coefficient $\kappa^{\epsilon}(x)$ is smooth and periodic with a period $\epsilon>0$ being sufficiently small, we proved that its first order approximation has a convergence rate of $\mathcal{O}(\epsilon^{1/2})$ when the dimension $d\leq 2$ and $\mathcal{O}(\epsilon^{1/6})$ when $d=3$. Several numerical tests are presented to show the performance of the first order approximation. Future studies include deriving the homogenization results for the case of existence of extra time scale in the diffusion coefficient $\kappa^{\epsilon,\eta}(x,t)$ with $\epsilon$ and $\eta$ being small parameters. 
\section*{Acknowledgement}
GL acknowledges the support from the Royal Society through a Newton International Fellowship.
\bibliographystyle{siam}

\bibliography{ref}

\begin{thebibliography}{10}

\bibitem{giona1992fractional}
{\sc M.~Giona, S.~Cerbelli, and H.~E. Roman}, {\em Fractional diffusion
  equation and relaxation in complex viscoelastic materials}, Physica A:
  Statistical Mechanics and its Applications, 191 (1992), pp.~449--453.

\bibitem{grisvard2011elliptic}
{\sc P.~Grisvard}, {\em Elliptic problems in nonsmooth domains}, vol.~69, SIAM,
  2011.

\bibitem{ref:Jikov.1991}
{\sc V.~V. Jikov, S.~M. Kozlov, and O.~A. Ole\u{\i}nik}, {\em Homogenization of
  Differential Operators and Integral Functionals}, Springer-Verlag, Berlin,
  1994.
\newblock Translated from the Russian by G. A. Yosifian.

\bibitem{JLPZ14}
{\sc B.~Jin, R.~Lazarov, J.~Pasciak, and Z.~Zhou}, {\em Error analysis of
  semidiscrete finite element methods for inhomogeneous time-fractional
  diffusion}, IMA J. Numer. Anal., 35 (2015), pp.~561--582.

\bibitem{jin2019numerical}
{\sc B.~Jin, R.~Lazarov, and Z.~Zhou}, {\em Numerical methods for
  time-fractional evolution equations with nonsmooth data: a concise overview},
  Computer Methods in Applied Mechanics and Engineering, 346 (2019),
  pp.~332--358.

\bibitem{KilbasSrivastavaTrujillo:2006}
{\sc A.~A. Kilbas, H.~M. Srivastava, and J.~J. Trujillo}, {\em Theory and
  Applications of Fractional Differential Equations}, vol.~204 of North-Holland
  Mathematics Studies, Elsevier Science B.V., Amsterdam, 2006.

\bibitem{kou2008stochastic}
{\sc S.~C. Kou}, {\em Stochastic modeling in nanoscale biophysics: subdiffusion
  within proteins}, The Annals of Applied Statistics, 2 (2008), pp.~501--535.

\bibitem{MR0241822}
{\sc O.~A. Lady\v{z}enskaja, V.~A. Solonnikov, and N.~N. Uralceva}, {\em Linear
  and Quasilinear Equations of Parabolic Type}, Translated from the Russian by
  S. Smith. Translations of Mathematical Monographs, Vol. 23, American
  Mathematical Society, Providence, R.I., 1968.

\bibitem{chuanjuxu2007}
{\sc Y.~Lin and C.~Xu}, {\em Finite difference/spectral approximations for the
  time-fractional diffusion equation}, J. Comput. Phys., 225 (2007),
  pp.~1533--1552.

\bibitem{MR1420948}
{\sc E.~Maru\v{s}i\'{c}-Paloka and A.~Mikeli\'{c}}, {\em An error estimate for
  correctors in the homogenization of the {S}tokes and the {N}avier-{S}tokes
  equations in a porous medium}, Boll. Un. Mat. Ital. A (7), 10 (1996),
  pp.~661--671.

\bibitem{nigmatullin1986realization}
{\sc R.~Nigmatullin}, {\em The realization of the generalized transfer equation
  in a medium with fractal geometry}, physica status solidi (b), 133 (1986),
  pp.~425--430.

\bibitem{pastukhova2010estimates}
{\sc S.~Pastukhova}, {\em Estimates in homogenization of parabolic equations
  with locally periodic coefficients}, Asymptotic Analysis, 66 (2010),
  pp.~207--228.

\bibitem{Podlubnybook}
{\sc I.~Podlubny}, {\em Fractional Differential Equations}, vol.~198 of
  Mathematics in Science and Engineering, Academic Press, Inc., San Diego, CA,
  1999.
\newblock An introduction to fractional derivatives, fractional differential
  equations, to methods of their solution and some of their applications.

\bibitem{Sakamoto-Yamamoto11}
{\sc K.~Sakamoto and M.~Yamamoto}, {\em Initial value/boundary value problems
  for fractional diffusion-wave equations and applications to some inverse
  problems}, J. Math. Anal. Appl., 382 (2011), pp.~426--447.

\bibitem{scher1975anomalous}
{\sc H.~Scher and E.~W. Montroll}, {\em Anomalous transit-time dispersion in
  amorphous solids}, Physical Review B, 12 (1975), p.~2455.

\bibitem{Vanel:2017:119/64002}
{\sc A.~Vanel, O.~Schnitzer, and R.~Craster}, {\em Asymptotic network models of
  subwavelength metamaterials formed by closely packed photonic and phononic
  crystals}, EPL (Europhysics Letters), 119 (2017).

\bibitem{Zhikov2006}
{\sc V.~V. Zhikov and S.~E. Pastukhova}, {\em Estimates of homogenization for a
  parabolic equation with periodic coefficients}, Russian Journal of
  Mathematical Physics, 13 (2006), pp.~224--237.

\end{thebibliography}

\end{document}